\newtheorem{thm}{\sc Theorem}[section]
\newtheorem{prop}[thm]{\sc Proposition}
\newtheorem{lem}[thm]{\sc Lemma}
\newtheorem{cor}[thm]{\sc Corollary}
\theoremstyle{definition}
\theoremstyle{definition}
\newtheorem{de}[thm]{\sc Definition}
\theoremstyle{definition}
\newtheorem{rem}[thm]{\sc Remark}
\theoremstyle{definition}
\numberwithin{equation}{section}
\DeclareMathOperator{\R}{\mathbb R}
\begin{document}
\title[The Fundamental Theorems]{The Fundamental Theorems for curves and
surfaces in 3D Heisenberg group}
\author[H.-L.~Chiu, S.-H. Lai]{HUNG-LIN~CHIU and SIN-HUA LAI}
\address{Department of Mathematics, National Central University, Chung Li,
32054, Taiwan, R.O.C.}
\email{hlchiu@math.ncu.edu.tw}
\address{Department of Mathematics, National Central University, Chung Li,
32054, Taiwan, R.O.C.}
\email{972401001@cc.ncu.edu.tw}

\begin{abstract}
We study the local equivalence problems of curves and surfaces in
3-dimensional Heisenberg group via Cartan's method of moving frames and Lie
groups, and find a complete set of invariants for curves and surfaces. For
surfaces, in terms of these invariants and their suitable derivatives, we
also give a Gaussian curvature fromula of the metric induced from the
adapted metric on $H^1$, and hence form a new formula for the Euler number
of a closed surface.
\end{abstract}

\maketitle

%$\thanks{Email: hlchiu@math.ncu.edu.tw}
%\keywords{}\subjclass{Primary 32V05, 32V20; Secondary 53C56.}
\renewcommand{\subjclassname}{\textup{2000} Mathematics Subject
Classification}
%\keywords{Pseudo-hermitian manifold, sub-Laplacian, eigenvalues, CR Paneitz operator.}

%\tableofcontents

\section{Introduction}

\label{sec:1} In $3$-dimensional Euclidean space, it is well known that any
unit-speed curve is completely determined by its curvature and torsion. This
means that given any two function $k(s)$ and $\tau (s)$ with $k(s)>0$
everywhere, then there exists a unit-speed curve whose curvature and torsion
are $k$ and $\tau $, respectively. In addition, such a unit-speed curve is
unique up to a Euclidean rigid motion. This is the fundamental theorem of
curves. On the other hand, the fundamental theorem of surfaces says that,
instead of the scalar-invariants, the first and second fundamental forms are
the complete invariants for surfaces. In this paper we will show that there
are the analogous fundamental theorems of curves and surfaces in $3$%
-dimensional Heisenberg group $H^{1}$.

The Heisenberg group $H^{1}$ is the space $\R^{3}$ associated with the group
multiplication
\begin{equation}
(x_{1},y_{1},z_{1})\circ
(x_{2},y_{2},z_{2})=(x_{1}+x_{2},y_{1}+y_{2},z_{1}+z_{2}+y_{1}x_{2}-x_{1}y_{2}).
\end{equation}%
It is a $3$-dimensional Lie group. The space of all left invariant vector
fields is spanned by the following three vector fields:
\begin{equation}
\mathring{e}_{1}=\frac{\partial }{\partial x}+y\frac{\partial }{\partial z}%
,\  \  \mathring{e}_{2}=\frac{\partial }{\partial y}-x\frac{\partial }{%
\partial z}\  \  \text{and}\  \ T=\frac{\partial }{\partial z}.
\end{equation}%
The standard contact bundle on $H^{1}$ is the subbundle $\xi _{0}$ of the
tangent bundle $TH^{1}$ which is spanned by $e_{1}$ and $e_{2}$. It is also
defined to be the kernel of the contact form
\begin{equation}
\theta _{0}=dz+xdy-ydx.
\end{equation}%
The CR structure on $H^{1}$ is the endomorphism $J_{0}:\xi _{0}\rightarrow
\xi _{0}$ defined by
\begin{equation}
J_{0}(\mathring{e}_{1})=\mathring{e}_{2}\  \  \text{and}\ J_{0}(\mathring{e}%
_{2})=-\mathring{e}_{1}.
\end{equation}%
We sometimes view the Heisenberg group $H^{1}$ as a pseudohermitian manifold
when we consider it together with the standard pseudo-hermitian structure $%
(J_{0},\theta _{0})$. For the details about pseudo-hermitian structure, we
refer the readers to \cite{Le1}, \cite{Le2}, or \cite{We}. Let $PSH(1)$ be
the group of Heisenberg rigid motions, that is, the group of all
pseudo-hermitian transformations on $H^{1}$. Recall that a pseudo-hermitian
transformation on $H^{1}$ is a diffeomorphism on $H^{1}$ which preserves the
standard pseudo-hermitian structure $(J_{0},\theta _{0})$. In Subsection \ref%
{pseutran}, we give an explicit expression for a pseudo-hermitian
transformation.

Let $\gamma:(a,b)\rightarrow H^1$ be a parametrized curve. For each $%
t\in(a,b)$, the velocity $\gamma^{^{\prime }}(t)$ of $\gamma(t)$ has the
natural decompostion
\begin{equation}
\gamma^{^{\prime }}(t)=\gamma^{^{\prime }}_{\xi_{0}}(t)+\gamma^{^{\prime
}}_{T}(t),
\end{equation}
where $\gamma^{^{\prime }}_{\xi_{0}}(t)$ and $\gamma^{^{\prime }}_{T}(t)$
are, respectively, the orthogonal projection of $\gamma^{^{\prime }}(t)$ on $%
\xi_{0}$ along $T$ and the orthogonal projection of $\gamma^{^{\prime }}(t)$
on $T$ along $\xi_{0}$.

\begin{de}
A \textbf{horizontally regular curve} is a parametrized curve $\gamma(t)$
such that $\gamma^{^{\prime }}_{\xi_{0}}(t)\neq 0$ for each $t\in(a,b)$.
\end{de}

Proposition \ref{norpara} shows that a horizontally regular curve can always
be reparametrized by a parameter $s$ such that $|\gamma _{\xi
_{0}}^{^{\prime }}(s)|=1$ for every $s$. We call such a paramter $s$ the
horizontal arc-length, which is unique up to a constant.

For a horizontally regular curve $\gamma(s)$ parametrized by the horizontal
arc-length $s$, we define the $p$-curvature $k(s)$ and $T$-variation $%
\tau(s) $ as
\begin{equation}
\begin{split}
k(s)&=<\frac{dX(s)}{ds},Y(s)> \\
\tau(s)&=<\gamma^{^{\prime }}(s),T>,
\end{split}%
\end{equation}
where $X(s)=\gamma^{^{\prime }}_{\xi_{0}}(s)$ and $Y(s)=J_{0}X(s)$. We have
the following fundamental theorem for curves in $H^1$ which says that
horizontally regular curves are completely prescribed by the $p$-curvature
and $T$-variation as well.

\begin{thm}
\label{main1} Let $\gamma_{1}(s)$ and $\gamma_{2}(s)$ be two horizontally
regular curves parametrized by the horizontal arc-length. Suppose that they
have the same $p$-curvature $k(s)$ and $T$-variation $\tau(s)$. Then there
exists $g\in PSH(1)$ such that
\begin{equation}
\gamma_{2}(s)=g\circ \gamma_{1}(s),\  \text{for all}\ s.
\end{equation}
In addition, given smooth functions $k(s), \tau(s)$, there exists a
horizontally regular curve $\gamma(s)$, parametrized by the horizontal
arc-length, having $k(s)$ and $\tau(s)$ as its $p$-curvature and $T$%
-variation, respectively.
\end{thm}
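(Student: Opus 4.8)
The plan is to run Cartan's moving-frame argument. Along a horizontally regular curve $\gamma(s)$ parametrized by horizontal arc-length, attach the adapted orthonormal frame $\{X(s),Y(s),T\}$ with $X(s)=\gamma'_{\xi_{0}}(s)$ and $Y(s)=J_{0}X(s)$. Since $PSH(1)$ acts simply transitively on the $4$-dimensional set of \emph{adapted frames} (a point of $H^{1}$ together with a unit vector of $\xi_{0}$ at that point), the pair $\bigl(\gamma(s),X(s)\bigr)$ is the same datum as a curve in the Lie group $PSH(1)$, and the theorem becomes the standard existence/uniqueness statement for $PSH(1)$-valued functions with prescribed logarithmic derivative; the content is to check that this logarithmic derivative is recorded exactly by $(k(s),\tau(s))$.

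First I would derive the structure equations. Working in the coordinate model and using $\partial_{x}=\mathring{e}_{1}-yT$, $\partial_{y}=\mathring{e}_{2}+xT$, $\partial_{z}=T$, one finds $\gamma'(s)=x'\mathring{e}_{1}+y'\mathring{e}_{2}+\theta_{0}(\gamma')T$, so $X(s)=x'\mathring{e}_{1}+y'\mathring{e}_{2}$ and $\tau(s)=\theta_{0}(\gamma')=z'+xy'-yx'$. As $s$ is the horizontal arc-length, $x'^{2}+y'^{2}=1$, so $x'=\cos\phi$, $y'=\sin\phi$ for a smooth angle function $\phi(s)$; then $X=\cos\phi\,\mathring{e}_{1}+\sin\phi\,\mathring{e}_{2}$ and $Y=-\sin\phi\,\mathring{e}_{1}+\cos\phi\,\mathring{e}_{2}$. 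The standard pseudohermitian structure on $H^{1}$ is flat, i.e.\ the left-invariant frame $\{\mathring{e}_{1},\mathring{e}_{2},T\}$ is parallel for the Tanaka--Webster connection; differentiating gives $\tfrac{dX}{ds}=\phi'Y$ (and $\tfrac{dY}{ds}=J_{0}\tfrac{dX}{ds}=-\phi'X$), hence $k(s)=\langle\tfrac{dX}{ds},Y\rangle=\phi'(s)$. Thus the adapted frame evolves by
\begin{equation}
\gamma'(s)=X(s)+\tau(s)T,\qquad \frac{dX}{ds}=k(s)Y(s),\qquad \frac{dY}{ds}=-k(s)X(s),\qquad \frac{dT}{ds}=0 .
\end{equation}

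For the existence statement I would simply integrate this system: given smooth $k,\tau$ and initial data $(x_{0},y_{0},z_{0})$, $\phi_{0}$, set $\phi(s)=\phi_{0}+\int_{0}^{s}k$, then $x(s)=x_{0}+\int_{0}^{s}\cos\phi$, $y(s)=y_{0}+\int_{0}^{s}\sin\phi$, and finally $z(s)=z_{0}+\int_{0}^{s}(\tau-x\sin\phi+y\cos\phi)$ from $z'=\tau-xy'+yx'$. This is pure quadrature, so $\gamma(s)=(x(s),y(s),z(s))$ exists for all $s$; it satisfies $|\gamma'_{\xi_{0}}|=|X|=1$, hence is horizontally regular in horizontal arc-length, and by construction its $p$-curvature is $\phi'=k$ and its $T$-variation is $\theta_{0}(\gamma')=\tau$.

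For the uniqueness statement I would use the explicit form of $PSH(1)$ from Subsection~\ref{pseutran}: every $g\in PSH(1)$ is $g=L_{(a,b,c)}\circ R_{\alpha}$, a left translation composed with the rotation $R_{\alpha}\colon(x,y,z)\mapsto(x\cos\alpha-y\sin\alpha,\ x\sin\alpha+y\cos\alpha,\ z)$, and such maps send horizontally regular curves to horizontally regular ones, preserve the horizontal arc-length and $\theta_{0}$ (hence $k$ and $\tau$), and shift the angle function $\phi$ by $\alpha$ (for $R_{\alpha}$) or leave it unchanged (for $L$). Given $\gamma_{1},\gamma_{2}$ with common $k,\tau$ and angle functions $\phi_{1},\phi_{2}$, we have $\phi_{2}-\phi_{1}\equiv\alpha$ for a constant $\alpha$; taking this $\alpha$ and then choosing $(a,b,c)=\gamma_{2}(0)\circ\bigl(R_{\alpha}\gamma_{1}(0)\bigr)^{-1}$, the curve $g\circ\gamma_{1}$ with $g=L_{(a,b,c)}\circ R_{\alpha}$ has the same $k,\tau$, the same angle function $\phi_{2}$, and the same value at $s=0$ as $\gamma_{2}$; since these are precisely the initial data for the structure equations, ODE uniqueness gives $g\circ\gamma_{1}=\gamma_{2}$ for all $s$. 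The main obstacle is the first step — pinning down the Tanaka--Webster connection (the flatness of the standard pseudohermitian structure on $H^{1}$) so that $\tfrac{dX}{ds}=kY$ with no extra terms, together with verifying that $PSH(1)$ acts simply transitively on adapted frames; once these are in place, existence is a quadrature and uniqueness is routine ODE uniqueness plus bookkeeping of the initial frame.
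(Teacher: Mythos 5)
Your argument is correct in substance but takes a genuinely different route from the paper. The paper lifts $\gamma$ to the frame curve $\widetilde{\gamma}(s)=(\gamma(s);X(s),Y(s),T)$ in $PSH(1)$, computes its Darboux derivative $\widetilde{\gamma}^{\ast}\omega$ (which is exactly the $psh(1)$-valued matrix recording $1$, $k$, $\tau$ times $ds$), and then invokes the two abstract results on maps into Lie groups: Theorem \ref{ft1} gives uniqueness up to $g\in PSH(1)$ from equality of Darboux derivatives, and Theorem \ref{ft2} gives existence since $d\varphi=-\varphi\wedge\varphi$ is automatic over a one-dimensional base. You instead make everything explicit: you introduce the tangent angle $\phi$ of the planar projection, identify $k=\phi'$ and $\tau=\theta_{0}(\gamma')=z'+xy'-yx'$ (in agreement with Theorem \ref{main2}), obtain existence by pure quadrature, and obtain uniqueness by combining the explicit description $PSH(1)=\{L_{p}\circ\Phi_{R}\}$ of Theorem \ref{psgrp} with the classical fundamental theorem for plane curves plus one further integration for $z$. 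In effect you turn the observation behind Corollary 1.6 of the paper (the $xy$-projection intertwines Heisenberg congruence with Euclidean congruence) into a self-contained proof of Theorem \ref{main1}. What this buys is elementarity and explicit formulas for the curve; what the paper's route buys is a template that carries over verbatim to the surface case, where the integrability condition is no longer vacuous. Your reduction to adapted frames and the simple transitivity of $PSH(1)$ on them is exactly the paper's identification of $PSH(1)$ with the oriented frame bundle, so the two proofs are close in spirit even though the mechanics differ.

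One small correction: your displayed equation $\tfrac{dY}{ds}=J_{0}\tfrac{dX}{ds}=-k\,X$ is the Tanaka--Webster covariant derivative; for the coordinate (moving-frame) derivative used in the paper one has $\tfrac{dY}{ds}=-k\,X-T$, in accordance with $dY=-X\omega_{1}^{\,2}-T\omega^{1}$ in (\ref{movingframe}) and with the appendix. (For $X$ the two derivatives happen to coincide, since the $T$-components cancel, so your formula $\tfrac{dX}{ds}=k\,Y$ and hence $k=\phi'$ is correct under either reading.) Since neither your quadrature nor your uniqueness argument ever uses the $Y$-equation, this does not affect the validity of the proof, but the formula as written should be amended or flagged as a covariant derivative to keep the conventions consistent.
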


We say $\gamma (t)$ is a horizontal curve if $\gamma ^{^{\prime }}(t)=\gamma
_{\xi _{0}}^{^{\prime }}(t)$ for each $t\in (a,b)$. By the previous
definition $\gamma (s)$ is horizontal if and only if the $T$-variation $\tau
(s)=0$, we have immediately the following corollary.

\begin{cor}
Let $\gamma_{1}(s)$ and $\gamma_{2}(s)$ be two horizontal unit-speed curves
in $H^1$ with the same $p$-curvature $k(s)$. Then there exists $g\in PSH(1)$
such that
\begin{equation}
\gamma_{2}(s)=g\circ \gamma_{1}(s),\  \text{for all}\ s.
\end{equation}
In addition, given a smooth function $k(s)$, there exists a horizontal
unit-speed curve $\gamma(s)$ having $k(s)$ as its $p$-curvature.
\end{cor}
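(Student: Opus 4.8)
The plan is to obtain the corollary as the special case $\tau\equiv 0$ of Theorem \ref{main1}. The key preliminary step is to identify the class of curves in question: a horizontal unit-speed curve is exactly a horizontally regular curve, parametrized by the horizontal arc-length, whose $T$-variation vanishes identically. Indeed, if $\gamma$ is horizontal then $\gamma'(s)=\gamma'_{\xi_0}(s)$, so $\tau(s)=\langle\gamma'(s),T\rangle=0$; and since $\xi_0$ is orthogonal to $T$ we have $|\gamma'(s)|^2=|\gamma'_{\xi_0}(s)|^2+|\gamma'_T(s)|^2=|\gamma'_{\xi_0}(s)|^2$, so being unit-speed is the same as $|\gamma'_{\xi_0}(s)|=1$, i.e. $s$ is the horizontal arc-length. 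Conversely, a horizontally regular curve parametrized by the horizontal arc-length with $\tau\equiv 0$ satisfies $\gamma'_T(s)=\tau(s)\,T=0$, hence equals $\gamma'_{\xi_0}(s)$ and has unit length; this is precisely the content of the remark preceding the corollary.

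Granting this identification, the first assertion is immediate. The curves $\gamma_1$ and $\gamma_2$ are horizontally regular, parametrized by the horizontal arc-length, and share the $p$-curvature $k(s)$ and the $T$-variation $\tau(s)\equiv 0$; Theorem \ref{main1} then produces $g\in PSH(1)$ with $\gamma_2(s)=g\circ\gamma_1(s)$ for all $s$.

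For the existence statement, given a smooth function $k(s)$ I would apply the second half of Theorem \ref{main1} to the pair of functions $k(s)$ and $\tau(s)\equiv 0$. This yields a horizontally regular curve $\gamma(s)$, parametrized by the horizontal arc-length, whose $p$-curvature is $k(s)$ and whose $T$-variation is $0$. By the equivalence above, $\gamma$ is a horizontal unit-speed curve, and it has $k(s)$ as its $p$-curvature, as desired. There is essentially no obstacle in this argument: everything reduces to the bookkeeping identification of ``horizontal unit-speed'' with ``$\tau\equiv 0$ and parametrized by horizontal arc-length'', after which the corollary is a verbatim restriction of Theorem \ref{main1}.
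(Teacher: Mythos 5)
Your proposal is correct and follows the same route as the paper: the corollary is deduced from Theorem \ref{main1} by observing that a horizontal unit-speed curve is precisely a horizontally regular curve parametrized by horizontal arc-length with $\tau\equiv 0$, which is exactly the remark the paper makes immediately before stating the corollary. Your additional check that unit-speed coincides with horizontal arc-length parametrization for horizontal curves is a harmless (and welcome) bit of extra care.
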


In Subsection \ref{computofpt}, we compute the explicit formulae for the $p$%
-curvature and $T$-variation and get the following theorem.

\begin{thm}
\label{main2} Let $\gamma(t)=\big(x(t),y(t),z(t)\big)\in H^1$ be a
horizontally regular curve, not necessarily parametrized by horizontal
arc-length. Then the $p$-curvature $k(t)$ and $T$-variation $\tau(t)$ are
having the forms
\begin{equation}  \label{curformula}
\begin{split}
k(t)&=\frac{x^{^{\prime }}y^{^{\prime \prime }}-x^{^{\prime \prime
}}y^{^{\prime }}}{\big((x^{^{\prime }})^{2}+(y^{^{\prime }})^{2}\big)^{\frac{%
3}{2}}}(t) \\
\tau(t)&=\frac{xy^{^{\prime }}-x^{^{\prime }}y+z^{^{\prime }}}{\big(%
(x^{^{\prime }})^{2}+(y^{^{\prime }})^{2}\big)^{\frac{1}{2}}}(t).
\end{split}%
\end{equation}
\end{thm}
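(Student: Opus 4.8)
The plan is to compute the orthogonal decomposition $\gamma' = \gamma'_{\xi_0} + \gamma'_T$ in terms of the left-invariant frame $\{\mathring e_1, \mathring e_2, T\}$, extract the horizontal part, normalize it, and read off $k$ and $\tau$ from the definitions. First I would express the velocity vector of $\gamma(t) = (x(t), y(t), z(t))$ in the coordinate basis as $\gamma' = x'\partial_x + y'\partial_y + z'\partial_z$, and then rewrite this in the frame $\{\mathring e_1, \mathring e_2, T\}$. Using $\partial_x = \mathring e_1 - y\,T$ and $\partial_y = \mathring e_2 + x\,T$ and $\partial_z = T$, one gets
\begin{equation}
\gamma' = x'\,\mathring e_1 + y'\,\mathring e_2 + (z' - x'y + xy')\,T,
\end{equation}
so that $\gamma'_{\xi_0} = x'\,\mathring e_1 + y'\,\mathring e_2$ and $\gamma'_T = (z' + xy' - x'y)\,T$. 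Since the adapted metric makes $\{\mathring e_1, \mathring e_2, T\}$ orthonormal, we immediately get $|\gamma'_{\xi_0}| = \big((x')^2 + (y')^2\big)^{1/2}$ and $\tau = \langle \gamma', T\rangle = z' + xy' - x'y$ when the curve is parametrized by horizontal arc-length.

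Next I would handle the general (not necessarily horizontal arc-length) parametrization by a chain-rule argument. If $s$ denotes the horizontal arc-length, then $ds/dt = |\gamma'_{\xi_0}(t)| = \big((x')^2+(y')^2\big)^{1/2} =: v(t)$. The unit horizontal tangent is $X(s) = \gamma'_{\xi_0}(s) = \frac{1}{v}(x'\,\mathring e_1 + y'\,\mathring e_2)$, and $Y(s) = J_0 X(s) = \frac{1}{v}(x'\,\mathring e_2 - y'\,\mathring e_1)$ by the definition of $J_0$. Then $\frac{dX}{ds} = \frac{1}{v}\frac{dX}{dt}$, and since $\tau(s) = \langle \gamma'(s), T\rangle$ is computed with respect to the \emph{unit-speed} (in the horizontal sense) reparametrization, $\tau(t) = \frac{1}{v}(z' + xy' - x'y)$, which is the claimed formula. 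The computation of $k$ requires differentiating $X$; the one subtlety is that $\mathring e_1, \mathring e_2$ are position-dependent vector fields, so $\frac{dX}{dt}$ picks up extra terms, but these lie along $T$ (since $\nabla$ or rather the derivative of $\mathring e_i$ along the curve has a $T$-component only) and hence drop out when pairing with $Y \in \xi_0$; what survives is exactly $\frac{d}{dt}\big(\frac{x'}{v}\big)\mathring e_1 + \frac{d}{dt}\big(\frac{y'}{v}\big)\mathring e_2$ paired with $\frac{1}{v}(-y'\,\mathring e_1 + x'\,\mathring e_2)$.

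Carrying out that pairing gives $k = \frac{1}{v}\langle \frac{dX}{dt}, Y\rangle = \frac{1}{v}\big[\big(\frac{x'}{v}\big)'\cdot\frac{-y'}{v} + \big(\frac{y'}{v}\big)'\cdot\frac{x'}{v}\big]$, and expanding $\big(\frac{x'}{v}\big)' = \frac{x''v - x'v'}{v^2}$ etc., the $v'$ terms cancel and one is left with $k = \frac{x'y'' - x''y'}{v^3} = \frac{x'y'' - x''y'}{\big((x')^2+(y')^2\big)^{3/2}}$, exactly as stated. The main obstacle, such as it is, is bookkeeping: being careful that the frame $\{\mathring e_1, \mathring e_2\}$ is not coordinate-constant so that $d\mathring e_i/dt$ is nonzero, and checking that its contribution is purely in the $T$-direction (which follows from $[\mathring e_1, \mathring e_2] = -2T$ and the fact that both $\mathring e_i$ have constant coefficients in $\partial_x, \partial_y$ and only the $\partial_z$-coefficient varies). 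Once that is verified, everything is a routine chain-rule and algebra exercise with no analytic difficulty.
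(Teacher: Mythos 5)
Your proposal is correct and follows essentially the same route as the paper: decompose $\gamma'$ in the left-invariant frame to read off $\gamma'_{\xi_0}$ and $\tau$, then compute $k$ by the chain rule through the horizontal arc-length. The only cosmetic difference is that you argue a priori that the $t$-derivatives of $\mathring e_1,\mathring e_2$ along the curve point in the $T$-direction and so drop out against $Y$, whereas the paper writes $X$ out in coordinates and observes the same cancellation after the fact; the resulting algebra is identical.
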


As an application, we proceed to compute the $p$-curvature and $T$-variation
of the geodesics of $H^1$ in Subsection \ref{computofpt} and obtain a
characteristic description of the geodesics in $H^1$.

\begin{thm}
\label{chaofgeo} The geodesics of $H^1$ are just those horizontally regular
curves with vanishing $T$-variation and constant $p$-curvature, that is, $%
\tau=0$ and $k=c$ for some constant $c\in \R$.
\end{thm}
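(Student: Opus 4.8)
The plan is to identify the geodesic equations of the adapted Riemannian metric on $H^1$ (the metric for which $\{\mathring{e}_1,\mathring{e}_2,T\}$ is an orthonormal frame), and then to compare them directly with the formulae for $k(t)$ and $\tau(t)$ given in Theorem \ref{main2}. First I would write a curve as $\gamma(t)=(x(t),y(t),z(t))$ and expand its velocity in the left-invariant frame: $\gamma' = x'\mathring{e}_1 + y'\mathring{e}_2 + (z'+xy'-x'y)T$, so that the three components of $\gamma'$ in this frame are $a_1=x'$, $a_2=y'$, $a_3=z'+xy'-x'y=\theta_0(\gamma')$. Note $a_3$ is exactly the numerator appearing in the $\tau$-formula, and $\sqrt{a_1^2+a_2^2}=|\gamma'_{\xi_0}|$ is the speed factor in the denominators; this is the bridge between the two descriptions.

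Next I would compute the Levi-Civita connection of the adapted metric in this frame. Using the bracket relations $[\mathring{e}_1,\mathring{e}_2]=-2T$ and $[\mathring{e}_1,T]=[\mathring{e}_2,T]=0$, the Koszul formula yields $\nabla_{\mathring{e}_1}\mathring{e}_2=-\nabla_{\mathring{e}_2}\mathring{e}_1=-T$, $\nabla_{\mathring{e}_1}T=\nabla_T\mathring{e}_1=\mathring{e}_2$, $\nabla_{\mathring{e}_2}T=\nabla_T\mathring{e}_2=-\mathring{e}_1$, and $\nabla_{\mathring{e}_i}\mathring{e}_i=0$, $\nabla_T T=0$. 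From this I would assemble the geodesic equations $\nabla_{\gamma'}\gamma'=0$ as a system in $(a_1,a_2,a_3)$: schematically $a_1' - 2a_2a_3 = 0$, $a_2' + 2a_1a_3=0$, $a_3'=0$ (signs to be fixed by the computation). Thus a geodesic is characterized by $a_3 = \text{const}$ together with a rotation equation for $(a_1,a_2)$ driven by that constant.

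Then comes the translation step. From $a_3' = 0$ and $a_3 = \sqrt{a_1^2+a_2^2}\,\tau$ — reparametrizing by horizontal arc-length $s$ so that $a_1^2+a_2^2 \equiv 1$ — I would deduce that along a geodesic $\tau$ is constant; but I also need to see that $\tau$ is actually zero. This should fall out of the first two equations: with unit horizontal speed, writing $(a_1,a_2)=(\cos\varphi,\sin\varphi)$, the equations $a_1'=2a_2a_3$, $a_2'=-2a_1a_3$ give $\varphi' = -2a_3 = -2\tau$ (constant), while a separate computation of $z'$ along the geodesic, combined with $a_3 = z'+xy'-x'y$, forces consistency only when $\tau=0$ — here I would use that the $T$-component of $\nabla_{\gamma'}\gamma'$ involves $a_3' $ plus a term which, by the mixed connection coefficients, vanishes automatically, so the genuine content is that the geodesic must have $z' = x'y - xy'$, i.e. $a_3 = 0$, hence $\tau \equiv 0$. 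Once $\tau = 0$, the $p$-curvature is $k = \varphi'$ up to sign by its very definition $k=\langle X', Y\rangle$ with $X=(\cos\varphi,\sin\varphi)$, $Y=J_0X=(-\sin\varphi,\cos\varphi)$, so $k = \varphi'$, which the geodesic equation shows to be constant. Conversely, given $\tau\equiv 0$ and $k\equiv c$, running the same equations backwards shows $\nabla_{\gamma'}\gamma' = 0$, so the curve is a geodesic.

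The main obstacle I anticipate is bookkeeping rather than conceptual: getting the signs and factors of $2$ in the connection coefficients and in the geodesic system exactly right, and correctly handling the reparametrization (a geodesic of the adapted metric is automatically constant-speed, and one must check this constant speed is compatible with — indeed proportional to — the horizontal arc-length once $\tau=0$, since then $|\gamma'|^2 = a_1^2+a_2^2$). A cleaner alternative, which I would pursue if the frame computation gets messy, is to invoke the known explicit description of geodesics of $H^1$ (circular-type curves projecting to circles/lines in the $xy$-plane with $z$ determined by the enclosed-area condition $z' = x'y - xy'$) and simply plug these into formula \eqref{curformula}: the area condition gives $\tau = 0$ immediately, and a circle of radius $r$ in the plane has $\frac{x'y''-x''y'}{(x'^2+y'^2)^{3/2}} = \pm\frac1r$, a constant, giving $k = c$; the converse direction then follows from the uniqueness half of Theorem \ref{main1}.
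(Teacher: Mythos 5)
Your primary plan computes geodesics of the wrong metric. In this paper the ``geodesics of $H^1$'' are the sub-Riemannian (Carnot--Carath\'eodory) geodesics, defined by the Hamiltonian system whose cometric $h^{ij}$ is the degenerate one built from $\mathring{e}_1,\mathring{e}_2$ alone (note $h^{33}=(x^1)^2+(x^2)^2$, not $(x^1)^2+(x^2)^2+1$); these are automatically horizontal, which is where $\tau\equiv 0$ actually comes from. The Levi-Civita equation $\nabla_{\gamma'}\gamma'=0$ for the adapted Riemannian metric $g_{\theta_0}$ describes a genuinely different class of curves, and your own system exposes the problem: the $T$-component gives only $a_3'=0$, i.e.\ $a_3$ \emph{constant}, and there is no further equation forcing $a_3=0$ (the vertical line $t\mapsto(0,0,t)$, and the helices with $a_1^2+a_2^2=1$ and $a_3=c\neq 0$, are Riemannian geodesics with $\tau\neq 0$). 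Worse, for a horizontally-unit-speed Riemannian geodesic your rotation equation gives $k=\varphi'=-2a_3=-2\tau$, so the \emph{horizontal} Riemannian geodesics of $g_{\theta_0}$ are exactly the straight lines $k=\tau=0$; the circular sub-Riemannian geodesics with $k=c\neq 0$ are not Riemannian geodesics of the adapted metric at all. Hence the step where ``a separate computation \dots forces consistency only when $\tau=0$'' has nothing to supply it, and carried through honestly your computation proves a different, incompatible statement.

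Your fallback alternative is the right route and is essentially the paper's proof: the paper integrates the sub-Riemannian Hamiltonian system explicitly (straight lines when $\xi_3=0$; circles in the $xy$-plane with the horizontality condition $z'+xy'-x'y=0$ when $\xi_3\neq 0$), reads off $\tau=0$ and $k=\mp(a_1^2+a_2^2)^{-1/2}$ or $k=0$ from the formulas of Theorem \ref{main2}, and obtains the converse from the uniqueness half of Theorem \ref{main1} together with the fact that Heisenberg rigid motions carry geodesics to geodesics. If you take that route, you should derive rather than merely ``invoke'' the explicit geodesics, and observe that every constant $c\in\R$ is realized as the $p$-curvature of some geodesic, so that the congruence argument applies to every horizontally regular curve with $\tau=0$ and $k=c$.
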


Observing the formula (\ref{curformula}), which says that the $p$-curvature
of $\gamma(t)=\big(x(t),y(t),z(t)\big)$ is just the signed curvature of the
plane curve $\alpha(t)=\pi \circ \gamma(t)=\big(x(t),y(t)\big)$, where $\pi$
is the projection on $xy$ plane along the $z$-axis. On the other hand, it is
well known that the signed curvature completely describes the plane curves,
therefore we have immediately the following corollary:

\begin{cor}
If two horizontally regular curves in $H^{1}$ differ by a Heisenberg rigid
motion then their projections on $xy$-plane differ by a Euclidean rigid motion. In
particular, two horizontal curves in $H^{1}$ differ by a Heisenberg rigid
motion if and only if their projections on $xy$-plane are congruent in the Euclidean plane.
\end{cor}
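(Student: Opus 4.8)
The plan is to derive this corollary directly from the two results already established, namely Theorem~\ref{main1} and the observation (following from formula~(\ref{curformula})) that the $p$-curvature of a horizontally regular curve $\gamma(t)=(x(t),y(t),z(t))$ coincides with the signed curvature of its planar projection $\alpha(t)=\pi\circ\gamma(t)=(x(t),y(t))$. First I would record the behavior of the projection $\pi$ under a Heisenberg rigid motion: in Subsection~\ref{pseutran} an explicit form for elements of $PSH(1)$ is given, and one checks that conjugating the group law by $\pi$ produces a Euclidean rigid motion of the $xy$-plane; concretely, a pseudo-hermitian transformation acts on the first two coordinates as a rotation (coming from the action compatible with $J_0$) composed with a translation, so $\pi\circ g = \bar g\circ\pi$ for some $\bar g$ in the Euclidean isometry group $E(2)$. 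I would also note that $\pi$ maps horizontally regular curves to regular plane curves, since $|\gamma'_{\xi_0}|=|\alpha'|$, and that horizontal arc-length of $\gamma$ equals ordinary arc-length of $\alpha$.

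With these preliminaries, the first assertion is immediate: if $\gamma_2 = g\circ\gamma_1$ with $g\in PSH(1)$, then $\alpha_2 = \pi\circ\gamma_2 = \bar g\circ\pi\circ\gamma_1 = \bar g\circ\alpha_1$, so the projections differ by the Euclidean rigid motion $\bar g$. For the "in particular" clause, I would specialize to horizontal curves, where the $T$-variation $\tau(s)$ vanishes identically. The forward direction is the special case just proved. For the converse, suppose two horizontal unit-speed curves $\gamma_1,\gamma_2$ have projections $\alpha_1,\alpha_2$ that are congruent in the Euclidean plane, say $\alpha_2=\bar g\circ\alpha_1$ with $\bar g\in E(2)$. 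Since signed curvature is a complete invariant for plane curves (up to orientation-preserving isometry; one reduces to this case by reparametrizing, replacing $s$ by $-s$ if necessary, which is allowed since horizontal arc-length is only defined up to a constant and sign), $\alpha_1$ and $\alpha_2$ have the same signed curvature function, hence by the observation $\gamma_1$ and $\gamma_2$ have the same $p$-curvature $k(s)$. They also have the same $T$-variation, namely $\tau\equiv 0$. Therefore the Corollary following Theorem~\ref{main1} (the horizontal-curve corollary) applies and yields $g\in PSH(1)$ with $\gamma_2=g\circ\gamma_1$, i.e.\ the two curves are congruent in $H^1$.

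The main point requiring care — the only genuine obstacle — is the compatibility statement $\pi\circ g=\bar g\circ\pi$ and, conversely, the fact that \emph{every} Euclidean rigid motion $\bar g$ of the $xy$-plane lifts to some $g\in PSH(1)$; this surjectivity is what makes the converse work, since from the equality of signed curvatures we only get a planar isometry and must promote it to a Heisenberg rigid motion. Both directions follow from the explicit description of $PSH(1)$ in Subsection~\ref{pseutran}: an element of $PSH(1)$ is (left translation by a group element) composed with the rotation $(x,y,z)\mapsto(x\cos\phi-y\sin\phi,\ x\sin\phi+y\cos\phi,\ z)$, and projecting kills the $z$-coordinate and the $z$-component of translation while leaving an arbitrary rotation and an arbitrary $xy$-translation — exactly $E(2)$. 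A secondary subtlety is the orientation issue in invoking completeness of signed curvature for plane curves; I would handle it by the reparametrization remark above, noting that reversing the horizontal arc-length parameter is harmless and that reflections in $E(2)$ correspond to composing with $s\mapsto -s$ together with a sign change of $k$, all of which is absorbed once we are free to choose the sign of the horizontal arc-length on each curve.
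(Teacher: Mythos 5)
Your overall strategy is the one the paper intends: the forward direction follows from the explicit form of $\Phi_{p,R}$ in Subsection \ref{pseutran}, which shows that every $g\in PSH(1)$ covers an isometry $\bar g$ of the $xy$-plane with $\pi\circ g=\bar g\circ\pi$; and the converse for horizontal curves follows by matching signed curvatures via formula (\ref{curformula}) and invoking the corollary of Theorem \ref{main1} with $\tau\equiv 0$. That part of your argument is correct, and your observation that horizontal arc-length equals Euclidean arc-length of the projection is the right supporting remark.

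However, the step you yourself single out as ``what makes the converse work'' contains a genuine error. By Theorem \ref{psgrp}, every element of $PSH(1)$ is $L_p\circ\Phi_R$ with $R\in SO(2)$, so the image of $PSH(1)$ under $\pi$ is the group of rotations and translations, i.e.\ $SE(2)$, \emph{not} all of $E(2)$: no Heisenberg rigid motion projects to a reflection (a reflection would anti-commute with $J_0$). Your claim that reflections are ``absorbed'' by reversing the horizontal arc-length is also false: a reflection sends the signed curvature $\kappa(s)$ to $-\kappa(s)$, while the reversal $s\mapsto c-s$ sends it to $-\kappa(c-s)$; composing the two yields $\kappa(c-s)$, which agrees with $\kappa(s)$ only when $\kappa$ has a reflective symmetry. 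Concretely, if $\alpha_1$ is the plane curve with $\kappa(s)=e^{s}$ and $\gamma_1,\gamma_2$ are horizontal lifts of $\alpha_1$ and of its mirror image, then the projections are congruent in $E(2)$ but no arc-length reparametrizations make their signed curvatures agree, so $\gamma_1$ and $\gamma_2$ are not $PSH(1)$-congruent. The statement (and your proof) is correct once ``Euclidean rigid motion'' and ``congruent'' are read as \emph{orientation-preserving} congruence, which is what the paper's $SO(2)$ normalization forces; you should either make that restriction explicit or note that the ``if'' direction fails for reflections.
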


For a surface $\Sigma \subset H^{1}$ which is embedded in $H^{1}$, we can
also say something about fundamental theorem. First of all, we recall that a
singular point $p\in \Sigma $ is a point such that, at $p$, the tangent
plane $T_{p}\Sigma $ coincides with the contact plane $\xi _{0}(p)$.
Therefore outside the singular set (the non-singular part of $\Sigma $), it
is integrated to be a one-dimensional foliation for the intersection of $%
T\Sigma $ and $\xi _{0}$, which is called the characteristic foliation. Now
we define the normal coordinates.

\begin{de}
Let $F:U\rightarrow H^1$ be a parametrized surface with coordinates $(u,v)$
on $U\subset \R^2$. We say $F$ is normal if

\begin{enumerate}
\item $F(U)$ is a surface without singular points;

\item $F_{u}=\frac{\partial F}{\partial u}$ defines the characteristic
foliation on $F(U)$;

\item $|F_{u}|=1$ for each point $(u,v)\in U$, where the norm is respect to
the levi-metric on $H^1$.
\end{enumerate}

We call $(u,v)$ a normal coordinates.
\end{de}

It is easy to see that every non-singular point $p\in \Sigma$, there exists
a normal coordinates around $p$. For a normal parametrized surface $%
F:U\rightarrow H^1$, let $X=F_{u},\ Y=J_{0}X$ and $T=\frac{\partial}{%
\partial z}$, we define functions $a, b, c, l$ and $m$ on $U$ by
\begin{equation}  \label{coeofform}
\begin{array}{lll}
a=<F_{v},X> & b=<F_{v},Y> & c=<F_{v},T> \\
l=<F_{uu},Y> & m=<F_{uv},Y>. &
\end{array}%
\end{equation}
They satisfy the integrability conditions
\begin{equation}  \label{intcons1}
\begin{split}
a_{u}&=bl,\  \ b_{u}=-al+m,\  \ c_{u}=2b \\
l_{v}&-m_{u}=0.
\end{split}%
\end{equation}

The following theorem says that these functions are complete differential
invariants for the map $F$. We call $a, b$ and $c$ the coefficients of the
first kind of $F$, and $l, m$ the second kind.

\begin{thm}
\label{main4} Let $U\subset \R^{2}$ be a simply connected open set. Suppose
that $a,b,c,l$ and $m$ are functions on $U$ which satisfy the integrability
condition (\ref{intcons1}). Then there exists a normal parametrized surface $%
F:U\rightarrow H^{1}$ having $a,b,c$ and $l,m$ as the coefficients of the
first kind and the second kind, respectively. In addition, if $\widetilde{F}%
:U\rightarrow H^{1}$ is another such a normal parametrized surface, then it
differs from $F$ by a Heisenberg rigid motion, that is, there exists a
motion $g\in PSH(1)$ such that $\widetilde{F}(u,v)=g\circ F(u,v)$ for all $%
(u,v)\in U$.
\end{thm}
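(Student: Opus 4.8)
The plan is to treat this as an application of the standard Cartan–Darboux existence-and-uniqueness machinery for maps into a Lie group, using the adapted moving frame on $H^1$. First I would set up, for a normal parametrized surface $F:U\to H^1$, the lift $(F;X,Y,T)$ to the bundle of frames; since $H^1$ is parallelizable by the left-invariant fields $\mathring e_1,\mathring e_2,T$, and the levi-metric makes $X,Y$ an orthonormal pair in $\xi_0$, the frame $(X,Y,T)$ is obtained from $(\mathring e_1,\mathring e_2,T)$ by a rotation in the $e_1e_2$-plane by some angle $\varphi(u,v)$. Writing the Maurer–Cartan equations of $PSH(1)$ (equivalently, differentiating $F_u=X$, $F_v=aX+bY+cT$, and the frame vectors, using the structure equations of the pseudo-hermitian connection on $H^1$), the derivatives $F_{uu},F_{uv},F_u\mapsto dX$, $dY$ get expressed entirely through $a,b,c,l,m$ and $\varphi$, together with an equation $\varphi_u = $ (something), $\varphi_v=$ (something) governing $\varphi$. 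The point of the definitions of $a,b,c,l,m$ and of the integrability conditions (\ref{intcons1}) is precisely that these are the coefficients that appear.

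Next I would prove existence. Given $a,b,c,l,m$ on the simply connected $U$ satisfying (\ref{intcons1}), I would first solve for the angle function: the two expressions for $\varphi_u$ and $\varphi_v$ coming from the frame equations form an overdetermined first-order system $d\varphi = \eta$ for an explicit $1$-form $\eta$ built from $a,b,c,l,m$; the compatibility $d\eta=0$ is exactly (or follows from) the given integrability conditions, and simple connectivity of $U$ yields $\varphi$. With $\varphi$ in hand, the frame $(X,Y,T)$ as a map $U\to SO(2)\subset PSH(1)$ is determined, and then recovering $F$ itself amounts to integrating $dF = X\,du + (aX+bY+cT)\,dv$; I must check this $1$-form is closed, i.e. that $\partial_v(X) = \partial_u(aX+bY+cT)$ as $H^1$-valued (really $TH^1$-valued via the trivialization) expressions — this again reduces to (\ref{intcons1}) together with the equation for $d\varphi$. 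Simple connectivity then gives $F$, and one reads off that its coefficients of the first and second kind are the prescribed $a,b,c,l,m$, and that $F$ is normal (conditions (1)–(3) in the definition: $|F_u|=1$ is built in, $F_u$ spanning the characteristic foliation is built in, and non-singularity is the statement that $T_pF(U)\ne\xi_0$, which holds because $c=\langle F_v,T\rangle$ being... well, more precisely because the frame is genuinely $3$-dimensional — if there were a singular point the surface tangent plane would lie in $\xi_0$, contradicting the construction; I should double-check whether some nondegeneracy of $(a,b,c)$ is needed and, if so, whether it follows from the integrability conditions or should be an extra hypothesis).

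For uniqueness, suppose $F$ and $\widetilde F$ are two normal surfaces with the same invariants. Form their adapted frames $e(u,v)=(F;X,Y,T)$ and $\widetilde e(u,v)=(\widetilde F;\widetilde X,\widetilde Y,\widetilde T)$, viewed as maps $U\to PSH(1)$ (fixing a base frame). Fix $(u_0,v_0)$ and let $g\in PSH(1)$ be the unique motion with $g\cdot e(u_0,v_0)=\widetilde e(u_0,v_0)$. Then both $g\cdot e$ and $\widetilde e$ satisfy the same first-order ODE system in $(u,v)$ — namely $\partial_u$ and $\partial_v$ of the frame are given by the same $\mathfrak{psh}(1)$-valued matrices, because those matrices are built out of $a,b,c,l,m$ which agree — with the same initial condition, so by uniqueness for ODEs (applied along a path, using connectedness of $U$) they coincide on all of $U$; in particular $\widetilde F = g\circ F$.

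The main obstacle I anticipate is bookkeeping rather than conceptual: writing the structure equations of the adapted frame on $H^1$ correctly (the pseudo-hermitian connection $1$-form, the torsion, and the interaction of the Reeb direction $T$ with $\xi_0$), and then checking line-by-line that the closedness conditions $d\eta=0$ and $d(dF)=0$ reduce \emph{exactly} to (\ref{intcons1}) — the last relation $l_v-m_u=0$ should be the genuinely two-dimensional compatibility, while $a_u=bl$, $b_u=-al+m$, $c_u=2b$ presumably come from differentiating the frame relations along $u$ and will be used to simplify the mixed partials. A secondary subtlety is making sure the construction lands in $H^1$ and not just in $\R^3$ with the wrong group structure — i.e. that the integrated $F$ is compatible with the contact form $\theta_0$ — but this is automatic once one integrates inside $PSH(1)$ acting on a fixed base point, since the whole construction is equivariant.
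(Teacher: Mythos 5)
Your proposal is correct and is essentially the paper's own proof: both form the frame lift $(F;X,Y,T):U\to PSH(1)$, identify its Darboux derivative as the explicit $psh(1)$-valued form built from $du+a\,dv$, $b\,dv$, $c\,dv$, $l\,du+m\,dv$, and invoke the Maurer--Cartan uniqueness and existence theorems (Theorems \ref{ft1} and \ref{ft2}); your two-step integration (first the rotation angle $\varphi$, then $dF=X\,du+(aX+bY+cT)\,dv$) is just the Frobenius argument of Theorem \ref{ft2} unpacked for the semidirect-product structure of $PSH(1)$, and the verification that $d\phi=-\phi\wedge\phi$ is exactly (\ref{intcons1}) is the paper's computation. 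The nondegeneracy point you flag is genuine --- the integrability conditions alone do not force $c\neq 0$, which is what is needed for the resulting $F$ to be an immersion with nonsingular image --- but this hypothesis is equally implicit (and unverified) in the paper's own existence argument, so it is not a defect of your approach relative to theirs.
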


Note that, from (\ref{tlofc2}), we see that $l$, up to a sign, is
independent of the choice of the normal coordinates, hence it is a
differential invariant of the surface $F(U)$. Actually $l$ is the $p$-mean
curvature. Therefore $l=0$ means that $F(U)$ is a $p$-minimal surface. Such
a parametrization $F:U\rightarrow H^{1}$ is called a normal $p$-minimal
parametrized surface. From the integrability condition (\ref{intcons1}), we
see that the second kind of coefficient $m$ is entirely determined by the
first kind as
\begin{equation}
m=b_{u}.  \label{secbyfir1}
\end{equation}%
The integrability conditions (\ref{intcons1}) hence become to be
\begin{equation}
a_{u}=0,\  \ b_{uu}=0,\  \ c_{u}=2b,  \label{intcons3}
\end{equation}%
and thus we obtain the following corollary of Theorem \ref{main4}.

\begin{thm}
\label{main5} Let $U\subset \R^{2}$ be a simply connected open set. Suppose
that $a,b$ and $c$ are three functions on $U$ which satisfy the
integrability condition (\ref{intcons3}). Then there exists a normal $p$%
-minimal parametrized surface $F:U\rightarrow H^{1}$ having $a,b$ and $c$ as
the first kind of coefficients of $F$, and the second kind of coefficient is
determined by $b$ as (\ref{secbyfir1}). In addition, if $\widetilde{F}%
:U\rightarrow H^{1}$ is another such a normal $p$-minimal parametrized
surface, then it differs from $F$ by a Heisenberg rigid motion, that is,
there exists a motion $g\in PSH(1)$ such that $\widetilde{F}(u,v)=g\circ
F(u,v)$ for all $(u,v)\in U$.
\end{thm}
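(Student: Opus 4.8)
The plan is to obtain Theorem~\ref{main5} as the specialization of Theorem~\ref{main4} to the case of vanishing $p$-mean curvature, i.e.\ $l\equiv 0$. Recall from the discussion preceding the statement that $l$ is, up to sign, the $p$-mean curvature of $F(U)$, so that a normal parametrized surface is $p$-minimal precisely when $l\equiv 0$. Accordingly, given functions $a,b,c$ on the simply connected set $U$ satisfying the reduced integrability condition~(\ref{intcons3}), I would set $l:=0$ and $m:=b_{u}$ on $U$, and then check that the five functions $a,b,c,l,m$ satisfy the full integrability condition~(\ref{intcons1}). This is a direct verification: $a_{u}=0=b\cdot 0=bl$; $b_{u}=m=-a\cdot 0+m=-al+m$; $c_{u}=2b$ is unchanged; and $l_{v}-m_{u}=0-b_{uu}=0$ by~(\ref{intcons3}). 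Thus the hypotheses of Theorem~\ref{main4} are met.

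Applying Theorem~\ref{main4} then produces a normal parametrized surface $F:U\rightarrow H^{1}$ whose first-kind coefficients are $a,b,c$ and whose second-kind coefficients are $l=0$ and $m=b_{u}$. Since $l\equiv 0$, the surface $F(U)$ is $p$-minimal, so $F$ is a normal $p$-minimal parametrized surface, and its second-kind coefficient is determined by $b$ through~(\ref{secbyfir1}) as required.

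For the uniqueness statement, suppose $\widetilde{F}:U\rightarrow H^{1}$ is another normal $p$-minimal parametrized surface with first-kind coefficients $a,b,c$. Being $p$-minimal, its second-kind coefficient $\widetilde{l}$ vanishes identically, and by~(\ref{secbyfir1}) (equivalently, by the second relation in~(\ref{intcons1}) with $\widetilde{l}=0$) its other second-kind coefficient is $\widetilde{m}=b_{u}=m$. Hence $F$ and $\widetilde{F}$ share all five coefficients of the first and second kind, and the uniqueness part of Theorem~\ref{main4} yields $g\in PSH(1)$ with $\widetilde{F}(u,v)=g\circ F(u,v)$ for all $(u,v)\in U$.

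There is no serious obstacle here, since everything reduces to Theorem~\ref{main4}; the only points needing a moment's care are the bookkeeping that~(\ref{intcons3}) is exactly what~(\ref{intcons1}) becomes once $l$ is set to $0$ (so that no information is lost in passing to the reduced system), and the identification of the condition $l\equiv 0$ with $p$-minimality, both of which are already in place from the preceding paragraphs.
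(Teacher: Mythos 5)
Your proposal is correct and follows essentially the same route as the paper, which presents Theorem \ref{main5} as an immediate corollary of Theorem \ref{main4} by setting $l\equiv 0$ and $m=b_{u}$ and observing that (\ref{intcons1}) reduces exactly to (\ref{intcons3}). Your explicit verification of that reduction and of the uniqueness bookkeeping is exactly the content the paper leaves implicit.
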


Besides the $p$-mean curvature $l$, in Section \ref{invpasur}, we also show
that both $\alpha =\frac{b}{c}$, up to a sign (which is called the $p$%
-variation), and the adapted metric $g_{\theta _{0}}$ restricted to the
surface are also invariants of the surface $F(U)$. Actually $\alpha $ is the
function such that the vector field $\alpha e_{2}+T$ is tangent to the
surface, where $e_{2}=J_{0}e_{1}$ and $e_{1}$ is a unit vector field tangnet
to the characteristic foliation. Let $e_{\Sigma }$ be another unit vector
field tangent to the surface which is defined by
\begin{equation*}
e_{\Sigma }=\frac{\alpha e_{2}+T}{\sqrt{1+\alpha ^{2}}}.
\end{equation*}%
We have that these three invariants satisfy the integrability condition:
\begin{equation}
\begin{split}
(1+\alpha ^{2})^{\frac{3}{2}}(e_{\Sigma }l)& =(1+\alpha
^{2})(e_{1}e_{1}\alpha )-\alpha (e_{1}\alpha )^{2}+4\alpha (1+\alpha
^{2})(e_{1}\alpha ) \\
& +\alpha (1+\alpha ^{2})^{2}K+\alpha l(1+\alpha ^{2})^{\frac{1}{2}%
}(e_{\Sigma }\alpha )+\alpha (1+\alpha ^{2})l^{2},
\end{split}
\label{Intconsur}
\end{equation}%
where $K$ is the Gaussian curvature with respect to $g_{\theta
_{0}}|_{\Sigma }$.

The following theorem says that the Riemannian metric induced from the
adapted metric together with the $p$-mean curvature $l$ and $p$-variation $%
\alpha$ is a complete system of invariants for a surface without singular
point.

\begin{thm}[The fundamental theorem for surfaces in $H^{1}$]
\label{main8} Let $(\Sigma ,g)$ be a Riemannian $2$-manifold with Guassian
curvature $K$, and let $\alpha ^{^{\prime }},l^{^{\prime }}$ be two
real-valued functions on $\Sigma $. Assume that $K$, together with $\alpha
^{^{\prime }}$ and $l^{^{\prime }}$, satisfy the integrability condition (%
\ref{Intconsur}), with $\alpha ,l$ replaced by $\alpha ^{^{\prime
}},l^{^{\prime }}$, respectively. Then for every point $x\in
\Sigma $ there exists an open neighborhood $U$ containing $x$, and an
embedding $f:U\rightarrow H^{1}$ such that $g=f^{\ast }(g_{\theta
_{0}}),\alpha ^{^{\prime }}=f^{\ast }\alpha $ and $l^{^{\prime }}=f^{\ast }l$%
, where $\alpha ,l$ are the induced $p$-variation and $p$-mean curvature on $%
f(U)$. Moreover, $f$ is unique up to a Heisenberg rigid motion.
\end{thm}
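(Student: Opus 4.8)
The plan is to deduce Theorem \ref{main8} from Theorem \ref{main4}; the cost is that one must manufacture, out of the intrinsic data $(g,\alpha',l')$ on $\Sigma$, a normal coordinate chart near $x$ together with coefficients $a,b,c,l,m$ satisfying the integrability conditions (\ref{intcons1}). The bridge between the two formulations is a dictionary: for a normal parametrized surface $F:U\to H^{1}$ with coefficients $a,b,c,l,m$, a direct computation in the normal chart $(u,v)$ (with $X=F_{u}$, $Y=J_{0}X$) gives
\begin{equation*}
F^{*}g_{\theta_{0}}=du^{2}+2a\,du\,dv+(a^{2}+b^{2}+c^{2})\,dv^{2},\qquad \alpha=\frac{b}{c},\qquad l=l,
\end{equation*}
while the characteristic foliation is $\{v=\mathrm{const}\}$, with $e_{1}=\partial_{u}$ and $e_{\Sigma}=\pm(\partial_{v}-a\,\partial_{u})/\sqrt{b^{2}+c^{2}}$. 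Inserting these expressions into (\ref{Intconsur}) and using (\ref{intcons1}) one sees that, under this dictionary, the two integrability conditions are equivalent; one direction of this equivalence is essentially the computation of Section \ref{invpasur}.

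For existence I would work on a small neighbourhood $U$ of $x$ (shrinking it so that the surface to be constructed has no singular point, which is automatic since $\alpha'$ is finite). First produce the candidate characteristic foliation on $U$: by the dictionary its tangent line field must satisfy a first--order condition built from $g,\alpha',l'$ and their first derivatives, and one shows a local solution through $x$ exists; parametrize the leaves by $g$--arc length $u$, so $g(\partial_{u},\partial_{u})=1$, and pick a transverse parameter $v$. Set $a:=g(\partial_{u},\partial_{v})$; recover $b,c$ (up to a common sign) from $b/c=\alpha'$ and $b^{2}+c^{2}=g(\partial_{v},\partial_{v})-a^{2}$ (this is positive by Cauchy--Schwarz, and $c\neq 0$ since $\alpha'$ is finite); put $l:=l'$ and $m:=b_{u}+al$. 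The point of this step is that the equation the foliation was chosen to solve, together with (\ref{Intconsur}), forces the remaining relations $a_{u}=bl$, $c_{u}=2b$ and $l_{v}=m_{u}$ of (\ref{intcons1}) to hold. Theorem \ref{main4} then provides a normal parametrized surface $F:U\to H^{1}$ with these coefficients. Since $F_{u}=X$ and $F_{v}=aX+bY+cT$ are linearly independent ($c\neq 0$), $F$ is an immersion, hence an embedding after shrinking $U$; and by the dictionary $F^{*}g_{\theta_{0}}=g$, $F^{*}\alpha=\alpha'$, $F^{*}l=l'$. Taking $f:=F$ completes existence.

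For uniqueness, let $f,\widetilde{f}:U\to H^{1}$ be two such embeddings, $U$ shrunk so both images are non--singular. The characteristic foliations of $f(U)$ and of $\widetilde f(U)$ pull back, via $f$ and $\widetilde f$ respectively, to the \emph{same} foliation of $U$, because --- again by the dictionary --- the tangent line field of the characteristic foliation is an intrinsic expression in $g,\alpha',l'$. After a reparametrization we may therefore assume that $f$ and $\widetilde f$ are both normal with respect to one common chart $(u,v)$; then their coefficient data are both determined by $g,\alpha',l'$ through the dictionary, hence agree up to the overall sign of $(a,b,c)$, which is absorbed by the coordinate change $v\mapsto -v$ or by a rotation lying in $PSH(1)$. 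The uniqueness clause of Theorem \ref{main4} now gives $\rho\in PSH(1)$ with $\widetilde f=\rho\circ f$.

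I expect the main obstacle to be the second paragraph: building the normal chart on the abstract $\Sigma$ so that \emph{all} the relations in (\ref{intcons1}) hold. This is precisely where (\ref{Intconsur}) must be used, and the task amounts to showing that (\ref{Intconsur}) is not merely necessary but sufficient --- in effect re--running a metric version of the Frobenius/Cartan integration that underlies Theorem \ref{main4}, with the characteristic line field itself as an unknown to be solved for. A secondary technical point is the bookkeeping of the sign choices (orientation of the characteristic foliation, overall sign of $(a,b,c)$), which must be matched against the elements of $PSH(1)$ --- in particular the full circle of rotations of $\xi_{0}$ --- so that the conclusion is exactly ``unique up to a Heisenberg rigid motion''.
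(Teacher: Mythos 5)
Your route --- reducing to Theorem \ref{main4} by manufacturing a normal chart and coefficients $a,b,c,l,m$ on the abstract surface --- is genuinely different from the paper's, which never goes back to normal coordinates: the paper takes the orthonormal coframe $(\hat{\omega}^{1},\hat{\omega}^{2})$ dual to $(e_{1},e_{\Sigma})$, writes down a $psh(1)$-valued one-form $\phi$ directly from $g,\alpha',l'$ using the formula of Proposition \ref{prop1} for $\omega_{1}^{2}$, checks $d\phi+\phi\wedge\phi=0$ against (\ref{Intconsur}), and invokes Theorem \ref{ft2}; uniqueness is Theorem \ref{ft1} plus Proposition \ref{prop1}. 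Your dictionary ($F^{*}g_{\theta_{0}}=du^{2}+2a\,du\,dv+(a^{2}+b^{2}+c^{2})dv^{2}$, $\alpha=b/c$, $m=b_{u}+al$, $e_{1}=\partial_{u}$) is correct as far as it goes.

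However, the proposal has a genuine gap, and it sits exactly where you flagged it. First, the characteristic line field cannot be ``solved for'' from $(g,\alpha',l')$: the condition (\ref{Intconsur}) is written in terms of $e_{1}\alpha$, $e_{1}e_{1}\alpha$, $e_{\Sigma}l$, $e_{\Sigma}\alpha$, so it is not even a well-posed hypothesis on the abstract $\Sigma$ until a unit vector field $e_{1}$ (hence $e_{\Sigma}$) is fixed; the frame has to be read as part of the given data, not as an unknown satisfying ``a first-order condition built from $g,\alpha',l'$'' --- no such intrinsic condition is available. Second, and more seriously, the claim that taking $u$ to be arc length along the leaves and setting $m:=b_{u}+al$ makes the remaining relations $a_{u}=bl$, $c_{u}=2b$, $l_{v}=m_{u}$ of (\ref{intcons1}) follow from (\ref{Intconsur}) is unsupported and cannot succeed by a count of equations: (\ref{Intconsur}) is a single scalar identity (it is exactly $d\omega_{1}^{2}=0$, i.e.\ $l_{v}=m_{u}$, in the paper's derivation), whereas $a_{u}=bl$ and $c_{u}=2b$ correspond to the independent structure equations $d\omega^{1}=\omega_{1}^{2}\wedge\omega^{2}$ and $d\omega^{3}=2\,\omega^{1}\wedge\omega^{2}$. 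In the paper these are absorbed into the construction because $\omega^{1},\omega^{2},\omega^{3},\omega_{1}^{2}$ are \emph{defined} by the formulas of Proposition \ref{prop1} from the Levi-Civita connection of $g$, not deduced from (\ref{Intconsur}). So the central step of your argument --- ``re-running a metric version of the Frobenius/Cartan integration'' --- is precisely the theorem, and the proposal does not carry it out. To repair it you would either adopt the paper's ansatz (define the chart coefficients through Proposition \ref{prop1} and verify (\ref{intcons1}) directly), or add to your working hypotheses the first-order compatibility $\hat{\omega}_{1}^{2}=\frac{l'\alpha'}{\sqrt{1+(\alpha')^{2}}}\hat{\omega}^{1}+\bigl(2\alpha'+\frac{\alpha'(e_{1}\alpha')}{1+(\alpha')^{2}}\bigr)\hat{\omega}^{2}$ and show it delivers the missing relations.
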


In the proof of Theorem \ref{main8}, we also get

\begin{thm}
\label{main6} Let $\Sigma \subset H^1$ be an oriented surface.Then the
Gaussian curvature $K$ of the restricted metric $g_{\theta_{0}}|_{\Sigma}$
can be expressed by means of $l,\alpha$ and the derivatives of $\alpha$.
\begin{equation}  \label{Gaussfor}
K=\frac{(e_{1}\alpha)^{2}+2(1+\alpha^{2})(e_{1}\alpha)+4\alpha^{2}(1+%
\alpha^{2})-l(e_{\Sigma}\alpha)(1+\alpha^{2})^{\frac{1}{2}}} {%
(1+\alpha^{2})^{2}}.
\end{equation}
\end{thm}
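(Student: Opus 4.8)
The plan is to derive (\ref{Gaussfor}) as the Gauss equation of the isometric embedding $\Sigma\hookrightarrow(H^{1},g_{\theta_{0}})$, computed in the adapted orthonormal frame already in place before Theorem \ref{main8}. On the non-singular oriented surface $\Sigma$ let $e_{1}$ be the unit field tangent to the characteristic foliation, $e_{2}=J_{0}e_{1}$, $T=\partial/\partial z$, and $e_{\Sigma}=(\alpha e_{2}+T)/\sqrt{1+\alpha^{2}}$, so $\{e_{1},e_{\Sigma}\}$ is a $g_{\theta_{0}}|_{\Sigma}$-orthonormal frame of $T\Sigma$ and $N=(e_{2}-\alpha T)/\sqrt{1+\alpha^{2}}$ is a unit normal. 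Writing $e_{1}=\cos\psi\,\mathring{e}_{1}+\sin\psi\,\mathring{e}_{2}$, I will use two facts: (i) the $p$-mean curvature satisfies $\nabla_{e_{1}}e_{1}=l\,e_{2}$, equivalently $l=e_{1}\psi$, where $\nabla$ is the Levi--Civita connection of $g_{\theta_{0}}$; and (ii) Koszul's formula with $[\mathring{e}_{1},\mathring{e}_{2}]=-2T$, $[\mathring{e}_{1},T]=[\mathring{e}_{2},T]=0$ gives $\nabla_{\mathring{e}_{i}}\mathring{e}_{i}=0$, $\nabla_{\mathring{e}_{1}}\mathring{e}_{2}=-T=-\nabla_{\mathring{e}_{2}}\mathring{e}_{1}$, and $\nabla_{X}T=\nabla_{T}X=J_{0}X$ for every horizontal $X$ (the last being $C^{\infty}$-linear in $X$).

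First I would compute the ambient sectional curvature of the tangent plane. From (ii) one gets $\overline{R}(\mathring{e}_{1},\mathring{e}_{2})\mathring{e}_{2}=-3\mathring{e}_{1}$, $\overline{R}(\mathring{e}_{1},T)T=\mathring{e}_{1}$, and $\overline{R}(\mathring{e}_{1},\mathring{e}_{2})T=\overline{R}(\mathring{e}_{1},T)\mathring{e}_{2}=0$; since $g_{\theta_{0}}$ and $J_{0}$ are invariant under rotations of $\xi_{0}$, I may take $e_{1}=\mathring{e}_{1}$, $e_{2}=\mathring{e}_{2}$ in this computation, and expanding $\overline{R}(e_{1},e_{\Sigma})e_{\Sigma}$ by multilinearity yields $\overline{K}(e_{1}\wedge e_{\Sigma})=(1-3\alpha^{2})/(1+\alpha^{2})$. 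Next I would compute the scalar second fundamental form $h_{ij}=\langle\nabla_{E_{i}}E_{j},N\rangle$ with $E_{1}=e_{1}$, $E_{2}=e_{\Sigma}$: expanding $e_{1},e_{2}$ in the left-invariant frame and differentiating with (ii) gives $\nabla_{e_{1}}e_{1}=l\,e_{2}$, $\nabla_{e_{1}}e_{2}=-l\,e_{1}-T$, $\nabla_{e_{1}}T=e_{2}$, and the corresponding $\nabla_{e_{\Sigma}}$-formulae. The crucial point is that every term carrying a derivative of $\psi$ other than $l=e_{1}\psi$ lands in the $e_{1}$-direction and is therefore $N$-orthogonal, so it drops out; what survives is $h_{11}=l/\sqrt{1+\alpha^{2}}$, $h_{12}=(1+\alpha^{2}+e_{1}\alpha)/(1+\alpha^{2})$, $h_{22}=(e_{\Sigma}\alpha)/(1+\alpha^{2})$, involving only $\alpha$, $e_{1}\alpha$, $e_{\Sigma}\alpha$, $l$ — which is exactly why the final formula carries no second-order or $e_{\Sigma}l$ terms.

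The last step is to assemble $K=\overline{K}(e_{1}\wedge e_{\Sigma})+h_{11}h_{22}-h_{12}^{2}$; putting this over $(1+\alpha^{2})^{2}$ and cancelling the cross terms collapses it to (\ref{Gaussfor}) (the overall sign being that fixed by the orientation/curvature convention in force). I expect the only genuine difficulty to be the bookkeeping in the middle step: tracking how $\psi$, $\alpha$ and their $e_{1}$- and $e_{\Sigma}$-derivatives enter $\nabla_{e_{\Sigma}}e_{2}$ and $\nabla_{e_{\Sigma}}T$, and checking that every contribution not expressible through $l,\alpha,e_{1}\alpha,e_{\Sigma}\alpha$ is normal-free. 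An alternative, closer to the moving-frame method of the paper, is to run Cartan's structure equations for the coframe $\{\omega^{1},\omega^{2}\}$ dual to $\{e_{1},e_{\Sigma}\}$ on $\Sigma$: from $d\theta_{0}=2\theta^{1}\wedge\theta^{2}$ and $d\theta^{1}=d\psi\wedge\theta^{2}$ one finds $d\omega^{1},d\omega^{2}$ on $\Sigma$, reads off the connection form $\omega^{1}_{2}=A\omega^{1}+B\omega^{2}$ with $A,B$ explicit in $\alpha,l,e_{1}\alpha$, and extracts $K$ from $d\omega^{1}_{2}$; this produces a spurious $e_{\Sigma}l$ term that must be eliminated by means of the restriction identity $e_{\Sigma}\psi=(e_{1}\alpha+2\alpha^{2})/\sqrt{1+\alpha^{2}}$ (from $d(\theta^{2}-\alpha\theta_{0})|_{\Sigma}=0$) together with $[e_{1},e_{\Sigma}]=Ae_{1}+Be_{\Sigma}$ applied to $\psi$, after which the leftover $e_{1}e_{1}\alpha$ terms cancel. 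Because the Gauss-equation route sidesteps that extra elimination, that is the one I would take.
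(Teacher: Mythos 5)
Your route is genuinely different from the paper's. The paper never touches the ambient Levi--Civita connection or the Gauss equation: it expresses $\hat{\omega}_{1}^{2}$ through $\omega_{1}^{2}$ and the invariants (Proposition \ref{prop1}) and then reads $K$ off from $d\hat{\omega}_{1}^{2}=K\,\hat{\omega}^{1}\wedge\hat{\omega}^{2}$, using the Maurer--Cartan identities (\ref{mcsteq}) (in particular $d\omega_{1}^{2}=0$) to make the second-order terms cancel. Your intermediate computations all check out: with $[\mathring{e}_{1},\mathring{e}_{2}]=-2T$ one indeed gets $\nabla_{\mathring{e}_{1}}\mathring{e}_{2}=-T$, $\nabla_{X}T=J_{0}X$, $\overline{K}(e_{1}\wedge e_{\Sigma})=(1-3\alpha^{2})/(1+\alpha^{2})$, and I confirm $h_{11}=l/\sqrt{1+\alpha^{2}}$, $h_{12}=(1+\alpha^{2}+e_{1}\alpha)/(1+\alpha^{2})$, $h_{22}=(e_{\Sigma}\alpha)/(1+\alpha^{2})$, with the $e_{\Sigma}\psi$-contributions landing along $e_{1}$ and hence dropping out, exactly as you say. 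Your approach has the merit of explaining structurally why no $e_{1}e_{1}\alpha$ or $e_{\Sigma}l$ terms can appear.

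The final step, however, does not close as claimed. Assembling your data gives
\begin{equation*}
\overline{K}+h_{11}h_{22}-h_{12}^{2}
=\frac{-\,(e_{1}\alpha)^{2}-2(1+\alpha^{2})(e_{1}\alpha)-4\alpha^{2}(1+\alpha^{2})+l(e_{\Sigma}\alpha)(1+\alpha^{2})^{\frac{1}{2}}}{(1+\alpha^{2})^{2}},
\end{equation*}
which is exactly the \emph{negative} of (\ref{Gaussfor}). This cannot be fixed by an orientation choice: $\overline{K}+h_{11}h_{22}-h_{12}^{2}$ is unchanged under $e_{1}\mapsto -e_{1}$ or $N\mapsto -N$, and so is the right-hand side of (\ref{Gaussfor}). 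The discrepancy is real and comes from the paper's normalization $d\hat{\omega}_{1}^{2}=K\,d\sigma$, which together with (\ref{risteq}) yields the negative of the Gaussian curvature in the usual convention (for $dr^{2}+f(r)^{2}d\phi^{2}$ it returns $+f''/f$ rather than $-f''/f$). A concrete check: on the plane $z=0$ one has $l=0$, $\alpha=1/r$, $e_{1}=\partial_{r}$, $e_{\Sigma}\alpha=0$, induced metric $dr^{2}+r^{2}(1+r^{2})d\phi^{2}$; formula (\ref{Gaussfor}) evaluates to $+(3+2r^{2})/(1+r^{2})^{2}$, while that surface is negatively curved. So your Gauss-equation computation produces the honest Gaussian curvature, which equals $-1$ times (\ref{Gaussfor}); to recover the statement literally you must adopt the paper's sign convention for $K$ at the outset, and as written the assertion that the assembly ``collapses to (\ref{Gaussfor})'' is not correct.
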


By the Gauss-Bonnet formula, we immediately have the following corollary.

\begin{thm}
\label{main7} Let $\Sigma \subset H^1$ be a closed, oriented surface. Then
we have
\begin{equation}
\begin{split}
2\pi \chi(\Sigma)&=\int_{\Sigma}\frac{(e_{1}\alpha)^{2}+2(1+%
\alpha^{2})(e_{1}\alpha)+4\alpha^{2}(1+\alpha^{2})-l(e_{\Sigma}\alpha)(1+%
\alpha^{2})^{\frac{1}{2}}} {(1+\alpha^{2})^{2}} d\sigma \\
&=\int_{\Sigma}\frac{(e_{1}\alpha)^{2}+2(1+\alpha^{2})(e_{1}\alpha)+4%
\alpha^{2}(1+\alpha^{2})-l(e_{\Sigma}\alpha)(1+\alpha^{2})^{\frac{1}{2}}} {%
(1+\alpha^{2})^{\frac{3}{2}}} \omega^{1}\wedge \theta_{0},
\end{split}%
\end{equation}
where $d\sigma $ is the area form with respect to the induced metric from
the adapted metric $g_{\theta_{0}}$, and $\chi(\Sigma)$ is the Euler number
of $\Sigma$.
\end{thm}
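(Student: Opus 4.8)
The plan is to read off Theorem \ref{main7} from Theorem \ref{main6} together with the classical Gauss--Bonnet theorem. The first point to note is that a point $p\in\Sigma$ with $T_p\Sigma=\xi_0(p)$ is singular only for the characteristic foliation, not for the metric: since $g_{\theta_0}$ is a smooth Riemannian metric on $H^1$, its restriction $g_{\theta_0}|_\Sigma$ is a smooth Riemannian metric on the whole closed oriented surface $\Sigma$. Hence Gauss--Bonnet applies without modification and gives
\begin{equation*}
\int_\Sigma K\,d\sigma=2\pi\chi(\Sigma),
\end{equation*}
with $K$ and $d\sigma$ denoting the Gaussian curvature and area form of $g_{\theta_0}|_\Sigma$. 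Now substitute the expression (\ref{Gaussfor}) for $K$ supplied by Theorem \ref{main6}. That expression is defined only on the complement of the singular set, but there it equals the bounded function $K$, and the singular set has measure zero, so the substitution does not change the value of the integral; this gives the first displayed equality.

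For the second equality I would rewrite the area form in the adapted coframe of $\Sigma$. On the non-singular part, $\{e_1,e_\Sigma\}$ with $e_\Sigma=(\alpha e_2+T)/\sqrt{1+\alpha^2}$ is an orthonormal frame for $g_{\theta_0}|_\Sigma$; taking it positively oriented and letting $\{\omega^1,\omega^\Sigma\}$ be the dual coframe we have $d\sigma=\omega^1\wedge\omega^\Sigma$. Since $e_1\in\xi_0=\ker\theta_0$ we get $\theta_0(e_1)=0$, while $\theta_0(e_2)=0$ and $\theta_0(T)=1$ give $\theta_0(e_\Sigma)=(1+\alpha^2)^{-1/2}$. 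Therefore $\theta_0|_\Sigma=(1+\alpha^2)^{-1/2}\omega^\Sigma$, i.e. $\omega^\Sigma=(1+\alpha^2)^{1/2}\,\theta_0|_\Sigma$, and consequently
\begin{equation*}
d\sigma=\omega^1\wedge\omega^\Sigma=(1+\alpha^2)^{1/2}\,\omega^1\wedge\theta_0 .
\end{equation*}
Substituting this into the integrand turns the denominator $(1+\alpha^2)^2$ into $(1+\alpha^2)^{3/2}$, which is exactly the second line of the claimed identity.

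The argument has no genuine analytic difficulty: all the real work is in the curvature formula (\ref{Gaussfor}) of Theorem \ref{main6}, which we are entitled to assume, and in the standard Gauss--Bonnet theorem. The only things that deserve a line of justification are the observation that $g_{\theta_0}|_\Sigma$ is smooth across the singular set (so that Gauss--Bonnet is available globally and the singular set is negligible in the integral), and the short computation $d\sigma=(1+\alpha^2)^{1/2}\,\omega^1\wedge\theta_0$ relating the two volume forms that appear in the statement.
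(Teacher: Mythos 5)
Your proof is correct and follows essentially the same route as the paper, which simply invokes Gauss--Bonnet together with the curvature formula of Theorem \ref{main6}; your identity $d\sigma=(1+\alpha^2)^{1/2}\,\omega^1\wedge\theta_0$ is exactly the paper's relation $\hat{\omega}^2=\sqrt{1+\alpha^2}\,\omega^3$ with $\omega^3=\theta_0|_\Sigma$. Your extra care about the singular set (the metric is smooth there, and the set where the integrand is undefined is negligible) is a welcome addition that the paper leaves implicit.
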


Substituting the Gaussian curvature formula (\ref{Gaussfor}) into (\ref%
{Intconsur}), we see that the integrability condition (\ref{Intconsur}) is
equivalent to the Gaussian equation (\ref{Gaussfor}) together with the following
Codazzi-Like equation:
\begin{equation}
e_{\Sigma}l=\frac{e_{1}e_{1}\alpha+6\alpha(e_{1}\alpha)+4\alpha^{3}+\alpha
l^2}{(1+\alpha^2)^{\frac{1}{2}}}.
\end{equation}

\begin{rem}
There is also an integrability condition for a surface expressed as a graph of a function $u$, 
which is called a Codazzi-Like equation and shown up in \cite{CHMY}.
\end{rem}

We now give a brief outline of this paper. In section $2$, we state the two
propositions about uniqueness and existence of mappings of a smooth manifold
into a Lie group G which underlie the theory. In section $3$, we obtain the
representation of $\ PSH(1)$ which is the group of pseudohermitian
transformations on $H^{1}$. Also we discuss how the matrix Lie group $PSH(1)$
interpret as the set of "frames" on the homogeneous space $H^{1}=PSH(1)/SO(2)$.
Then from the (left-invariant) Maurer-Cartan form, we immediately get the
moving frame formula. In section $4$, we compute the Darboux derivative of a
lift of a horizontally regular curve in $H^{1}$ and then to get the
fundamental theorem for curves in $H^{1}$. Moreover, we compute the $p$%
-curvature and the $T$-variation of a horizontally regular curve and
geodesics in\ $H^{1}$. In section $5$, we compute the Darboux derivative of
the lift of a normal parametrized surface. Then we get complete differential
invariants for a normal parametrized surface. In section $6$, let $\Sigma $
be an oriented surface and $f:\Sigma \rightarrow H^{1}$ be an embedding. We
compute the Darboux derivative of the lifting of $f$ to get the fundamental
theorem for surfaces in $H^{1}$. In this section, we also compute the Gaussian formula 
(\ref{Gaussfor}) and the integrability condition (\ref{Intconsur}). Finally, in section $7$, we
give another proof for Theorem \ref{main1}.

\textbf{Acknowledgment.} The first author's research was supported in part
by NCTS and in part by NSC 100-2628-M-008-001-MY4. He would like to thank
Prof. Jih-Hsin Cheng for his teaching and talking on this
topic, Prof. Paul Yang for his encouragement and advising in the research
for the last few years. The second author would like to express her thanks
to Prof. Shu-Cheng Chang for his teaching, constant encouragement and
supports.

\section{Calculus on Lie group}

Let $M$ be a connected smooth manifold, and let $G\subset GL(n,R)$ be a
matrix Lie group with Lie algebra $\mathfrak{g}$ and the (left-invariant)
Maurer-Cartan form $\omega $. In this section, we shall give, without
proofs, two simple and essential local results concerning smooth maps from a
manifold $M$ into a Lie group $G$. These two results play a fundamental role
in whole of the paper. For the details, we refer the readers to \cite{G},%
\cite{IL},\cite{S} and \cite{CC}. The first of these is

\begin{thm}
\label{ft1} Given two maps $f, \widetilde{f}:M\rightarrow G$, then $%
\widetilde{f}^{*}\omega=f^{*}\omega$ if and only if $\widetilde{f}=g\cdot f$
for some $g\in G$.
\end{thm}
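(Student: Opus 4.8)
The plan is to reduce the statement to the elementary fact that a smooth map into $G$ whose Maurer--Cartan pullback vanishes must be locally constant. Two facts underlie this: first, since $G\subset GL(n,\mathbb R)$ is a matrix group we may write $f^{*}\omega=f^{-1}\,df$ for any $f\colon M\to G$; second, at each point $\omega_{g}\colon T_{g}G\to\mathfrak g$ is a linear isomorphism, so a map $h\colon M\to G$ satisfies $h^{*}\omega=0$ if and only if $dh=0$, i.e. (as $M$ is connected) if and only if $h$ is constant. So the whole proof amounts to producing, from the hypothesis $\widetilde f^{*}\omega=f^{*}\omega$, a map $h$ with $h^{*}\omega=0$ and with $\widetilde f=h\cdot f$.

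The key intermediate step I would record is the ``cocycle'' identity for the pullback of $\omega$ under a pointwise product: if $\varphi,\psi\colon M\to G$ and $\varphi\psi$ denotes $x\mapsto\varphi(x)\psi(x)$, then
\begin{equation*}
(\varphi\psi)^{*}\omega=\mathrm{Ad}(\psi^{-1})\,\varphi^{*}\omega+\psi^{*}\omega .
\end{equation*}
For a matrix group this is the one-line computation $(\varphi\psi)^{-1}d(\varphi\psi)=\psi^{-1}(\varphi^{-1}d\varphi)\psi+\psi^{-1}d\psi$; in general it follows by differentiating the product map and using left-invariance of $\omega$. The converse direction of the theorem is then immediate: if $\widetilde f=g\cdot f$ with $g\in G$ constant, apply the identity with $\varphi\equiv g$ and $\psi=f$; since $\varphi$ is constant, $\varphi^{*}\omega=0$, whence $\widetilde f^{*}\omega=f^{*}\omega$. (Equivalently, $\widetilde f=L_{g}\circ f$ and $L_{g}^{*}\omega=\omega$.)

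For the main direction, assume $\widetilde f^{*}\omega=f^{*}\omega$ and set $h=\widetilde f\cdot f^{-1}\colon M\to G$, so that $\widetilde f=h\cdot f$. The cocycle identity gives
\begin{equation*}
f^{*}\omega=\widetilde f^{*}\omega=\mathrm{Ad}(f^{-1})\,h^{*}\omega+f^{*}\omega ,
\end{equation*}
hence $\mathrm{Ad}\big(f(x)^{-1}\big)\big((h^{*}\omega)_{x}\big)=0$ for every $x\in M$. Since $\mathrm{Ad}\big(f(x)^{-1}\big)$ is a linear automorphism of $\mathfrak g$, this forces $h^{*}\omega=0$, so $dh=0$, and connectedness of $M$ makes $h$ a constant $g_{0}\in G$; therefore $\widetilde f=g_{0}\cdot f$, as desired.

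The only points that need genuine care are the left/right-invariance bookkeeping in the cocycle identity (equivalently, remembering that $L_{g}^{*}\omega=\omega$ while $R_{g}^{*}\omega=\mathrm{Ad}(g^{-1})\omega$), and the appeal to connectedness of $M$ to upgrade $dh=0$ to ``$h$ globally constant''; beyond this there is no substantial obstacle, which is why the theorem is quoted here without proof.
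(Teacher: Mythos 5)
Your argument is correct and complete. The paper itself states Theorem \ref{ft1} without proof (deferring to the references in Section 2), so there is no in-text argument to compare against; what you give is the standard proof. Your two ingredients are exactly right: the cocycle identity $(\varphi\psi)^{*}\omega=\mathrm{Ad}(\psi^{-1})\,\varphi^{*}\omega+\psi^{*}\omega$, which for a matrix group is the one-line computation you indicate, and the fact that $\omega_{g}$ is a pointwise isomorphism $T_{g}G\to\mathfrak{g}$, so $h^{*}\omega=0$ forces $dh=0$. The appeal to connectedness of $M$ to conclude that $h$ is globally constant is legitimate here, since the paper assumes $M$ connected at the start of Section 2. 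The only cosmetic remark is that the paper writes the group action as $\widetilde{f}=g\cdot f$ with $g$ acting on the left, which matches your choice $h=\widetilde{f}\cdot f^{-1}$; had the convention been right multiplication one would instead use right-invariance of the \emph{right} Maurer--Cartan form, so your explicit bookkeeping of $L_{g}^{*}\omega=\omega$ versus $R_{g}^{*}\omega=\mathrm{Ad}(g^{-1})\omega$ is exactly the point worth recording.
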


The Lie algebra one-form $f^{*}\omega$ is usually called \textbf{the Darboux
derivative} of the map $f:M\rightarrow G$. The second one is a well-known
existence theorem:

\begin{thm}
\label{ft2} Suppose that $\phi$ is a $\mathfrak{g}$-valued one form on a
simply connected manifold $M$. Then there exists a map $f:M\rightarrow G$
with $f^{*}\omega=\phi$ if and only if $d\phi=-\phi \wedge \phi$.\newline
Moreover, the resulting map $f$ is unique up to a group action.
\end{thm}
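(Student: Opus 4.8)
\section*{Proof proposal}

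The plan is to reduce everything to the Frobenius theorem on the product $M\times G$, with the Maurer--Cartan equation providing the integrability. First note the easy direction: if $f\colon M\to G$ satisfies $f^{*}\omega=\phi$, then pulling back the structure equation $d\omega=-\omega\wedge\omega$ for the left-invariant form $\omega$ gives $d\phi=f^{*}d\omega=-f^{*}(\omega\wedge\omega)=-\phi\wedge\phi$, which is the stated necessary condition. Likewise the uniqueness clause needs no separate argument: if $\widetilde f$ is another map with $\widetilde f^{*}\omega=\phi=f^{*}\omega$, then Theorem \ref{ft1} immediately yields $\widetilde f=g\cdot f$ for some $g\in G$. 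So the whole content is the existence of $f$.

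For existence, let $\pi_{1}\colon M\times G\to M$ and $\pi_{2}\colon M\times G\to G$ be the projections, and set $\Theta:=\pi_{2}^{*}\omega-\pi_{1}^{*}\phi$, a $\mathfrak g$-valued one-form on $M\times G$. Since $\omega$ restricts to a coframe on each fiber $\{m\}\times G$, the subbundle $\mathcal D:=\ker\Theta\subset T(M\times G)$ has rank $\dim M$ and is transverse to the $G$-fibers. The integrability computation is short: using $d\omega=-\omega\wedge\omega$, the hypothesis $d\phi=-\phi\wedge\phi$, and the substitution $\pi_{2}^{*}\omega=\Theta+\pi_{1}^{*}\phi$, one finds
\begin{equation*}
d\Theta=-\Theta\wedge\Theta-\Theta\wedge\pi_{1}^{*}\phi-\pi_{1}^{*}\phi\wedge\Theta,
\end{equation*}
so $d\Theta$ lies in the algebraic ideal generated by $\Theta$; hence $\mathcal D$ is involutive and, by the Frobenius theorem, integrable.

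Now fix a basepoint $m_{0}\in M$ and let $N$ be the (connected) maximal integral leaf of $\mathcal D$ through $(m_{0},e)$. Transversality of $\mathcal D$ to the $G$-fibers, together with $\dim N=\dim M$, makes $p:=\pi_{1}|_{N}\colon N\to M$ a local diffeomorphism. The key step is to promote $p$ to a covering map: given a smooth path $c$ in $M$ and a chosen lift of $c(0)$ to $N$, the condition that the lift $t\mapsto(c(t),\gamma(t))$ be tangent to $\mathcal D$ reads $\omega(\dot\gamma(t))=\phi(\dot c(t))$, i.e. a time-dependent linear ODE on $G$ whose right-hand side is a combination of \emph{left-invariant} vector fields; since left-invariant vector fields are complete, this ODE is solvable on the entire parameter interval, so paths lift, and (by the same device applied to homotopies) path-homotopies lift as well. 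Standard covering-space arguments then show $p\colon N\to M$ is a covering; as $M$ is simply connected and $N$ is connected, $p$ is a diffeomorphism. Setting $f:=\pi_{2}\circ p^{-1}\colon M\to G$, the leaf $N$ is precisely the graph of $f$, and restricting the identity $\Theta=0$ on $N$ to this graph gives $f^{*}\omega-\phi=0$, i.e. $f^{*}\omega=\phi$, as required. The main obstacle is exactly this last leaf-to-covering step: one must know the lifting ODE has solutions for all parameter values and not merely for short times, and this is where the Lie group structure --- completeness of left-invariant vector fields --- is indispensable.
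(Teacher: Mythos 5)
Your proof is correct, and it is precisely the standard Frobenius-theorem argument that the paper itself points to: the paper states this result without proof, remarking only that ``the proof of Theorem \ref{ft2} is strongly dependent on the Frobenius theorem'' and deferring to the references, and your graph construction with $\Theta=\pi_{2}^{*}\omega-\pi_{1}^{*}\phi$ on $M\times G$ is exactly how that dependence is realized. The only step worth a touch more care is the global solvability of the time-dependent lifting ODE $\dot\gamma(t)=(L_{\gamma(t)})_{*}\phi(\dot c(t))$ --- ``left-invariant vector fields are complete'' addresses the autonomous case, and one should note either that left-invariance gives an existence time uniform over $G$ (so solutions extend past any finite time) or, in the matrix-group setting of this paper, that the equation is linear and hence globally solvable; with that remark your argument is complete.
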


The proof of Theorem \ref{ft2} is strongly dependent on the Frobenius
theorem.

\section{The group of pseudohermitian transformations on $H^1$}

\subsection{The pseudohermitian transformations on $H^1$}

\label{pseutran} A pseudohermitian transformation on $H^{1}$ is a
diffeomorphism $\Phi $ on $H^{1}$ which preserves both the CR structure $%
J_{0}$ and the contact form $\theta _{0}$, that is, it satisfies
\begin{equation}
\Phi _{\ast }J_{0}=J_{0}\Phi _{\ast }\  \  \text{on}\  \xi _{0}\  \  \text{and}\
\  \Phi ^{\ast }\theta _{0}=\theta _{0}.
\end{equation}%
Let $L_{p}$ be the left translation by $p$ on the Heisenberg group $H^{1}$.
It is easy to see that $L_{p}$ is a pseudohermitian transformation. We give
another pseudohermitian transformation $\Phi _{R}:H^{1}\rightarrow H^{1}$
which is defined by
\begin{equation}
\Phi _{R}\left(
\begin{array}{c}
x \\
y \\
z%
\end{array}%
\right) \longrightarrow \left(
\begin{array}{cc}
R & 0 \\
0 & 1%
\end{array}%
\right) \left(
\begin{array}{c}
x \\
y \\
z%
\end{array}%
\right) ,
\end{equation}%
where $R\in SO(2)$ is a $2\times 2$ orthogonal matrix.

Let $PSH(1)$ be the group of pseudohermitian transformations on $H^{1}$. The
following theorem specifies that the group $PSH(1)$ consists exactly of all
the transformations of the forms $\Phi _{p,R}\doteq L_{p}\circ \Phi _{R}$,
that is, a transformation $\Phi _{R}$ followed by a left translation $L_{p}$%
. We have
\begin{equation}
\Phi _{p,R}\left(
\begin{array}{c}
x \\
y \\
z%
\end{array}%
\right) =\left(
\begin{array}{c}
ax+by+p_{1} \\
cx+dy+p_{2} \\
(ap_{2}-cp_{1})x+(bp_{2}-dp_{1})y+z+p_{3}%
\end{array}%
\right) ,
\end{equation}%
where $p=(p_{1},p_{2},p_{3})^{t}\in H^{1}$ and $R=\left(
\begin{array}{cc}
a & b \\
c & d%
\end{array}%
\right) \in SO(2)$.

\begin{thm}
\label{psgrp} Let $\phi:H^{1}\rightarrow H^{1}$ be a pseudohermitian
transformation. Then $\Phi=L_{p}\circ \Phi_{R}$ for some $R\in SO(2)$ and $%
p\in H^{1}$.
\end{thm}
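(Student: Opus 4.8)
The plan is to peel off a left translation so that the map fixes the origin, and then to show that a pseudohermitian transformation fixing the origin must be one of the rotations $\Phi_R$. First I would set $p=\Phi(0)\in H^1$ and replace $\Phi$ by $\Psi:=L_{p^{-1}}\circ\Phi$. Since $L_{p^{-1}}$ is a pseudohermitian transformation and a composition of pseudohermitian transformations is again pseudohermitian, $\Psi$ is a pseudohermitian transformation with $\Psi(0)=0$, and it suffices to prove $\Psi=\Phi_R$ for some $R\in SO(2)$, for then $\Phi=L_p\circ\Phi_R$.

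Next I would analyze the differential $d\Psi_0$ at the origin. From $\Psi^{*}\theta_0=\theta_0$ one gets that $d\Psi$ carries the contact distribution $\xi_0=\ker\theta_0$ to itself, and also $\Psi^{*}d\theta_0=d\theta_0$. Since the Reeb vector field $T$ is characterized by $\theta_0(T)=1$ and $\iota_T d\theta_0=0$, these two identities force $\Psi_{*}T=T$, in particular $d\Psi_0(\partial_z)=\partial_z$. On $\xi_0(0)$ the map $d\Psi_0$ is $J_0$-linear (because $\Psi_{*}J_0=J_0\Psi_{*}$ on $\xi_0$) and preserves the Levi form $d\theta_0(\cdot,J_0\cdot)$, hence it is a unitary map of the one-dimensional Hermitian space $\xi_0(0)$, i.e. a rotation $R\in SO(2)\cong U(1)$. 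Identifying $\xi_0(0)=\mathrm{span}\{\partial_x,\partial_y\}$ and $T_0=\partial_z$ (valid at the origin, where $\mathring{e}_1=\partial_x$ and $\mathring{e}_2=\partial_y$), we obtain $d\Psi_0=\bigl(\begin{smallmatrix}R&0\\0&1\end{smallmatrix}\bigr)$, which is precisely $d(\Phi_R)_0$.

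Finally I would invoke rigidity. Because $\Psi$ preserves $\theta_0$, $d\theta_0$ and $J_0$, it preserves the adapted (Webster) metric $g_{\theta_0}$, which is a genuine Riemannian metric on $H^1\cong\R^3$; the same is true of $\Phi_R$. Thus $\Phi_R^{-1}\circ\Psi$ is an isometry of the connected Riemannian manifold $(H^1,g_{\theta_0})$ fixing the origin and with identity differential there, and by the standard fact that a Riemannian isometry is determined by its $1$-jet at a single point (the agreement locus is open, since isometries commute with $\exp_0$, as well as closed and nonempty), $\Phi_R^{-1}\circ\Psi=\mathrm{id}$. Hence $\Psi=\Phi_R$ and $\Phi=L_p\circ\Phi_R$, the explicit coordinate expression then being the one displayed just before the theorem.

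The step I expect to be the main obstacle is the last one: making precise that the pseudohermitian data assemble into a positive-definite $\Psi$-invariant metric and then appealing to isometry rigidity on a connected manifold. A more elementary and self-contained route, avoiding Riemannian rigidity, is to compute the Darboux derivative $\Psi^{*}\omega$ of a lift of $\Psi$ into the frame group and use $d(\Psi^{*}\omega)=-\Psi^{*}\omega\wedge\Psi^{*}\omega$ from Section~2 to pin $\Psi$ down; alternatively one can solve directly in coordinates the first-order system encoded by $\Psi^{*}\theta_0=\theta_0$ together with the $J_0$-linearity of $d\Psi$ on $\xi_0$, which, using $\Psi(0)=0$, integrates uniquely to $\Phi_R$.
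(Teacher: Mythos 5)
Your proposal is correct, and its second half takes a genuinely different route from the paper. The first step is identical: set $p=\Phi(0)$ and reduce to showing that a pseudohermitian transformation $\Psi$ fixing the origin equals some $\Phi_R$. From there the paper works pointwise: it shows that in the frame $(\mathring{e}_1,\mathring{e}_2,T)$ the differential $\Psi_*(q)$ is a rotation by an angle $\alpha(q)$ at \emph{every} point $q$, rewrites this matrix in the coordinate frame $(\partial_x,\partial_y,\partial_z)$, and then uses the symmetry of mixed partials ($\Phi^i_{xy}=\Phi^i_{yx}$) to force $\alpha_x=\alpha_y=0$, so that $\alpha$ is constant and the resulting constant-coefficient system integrates to $\Phi_R$. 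You instead compute the $1$-jet of $\Psi$ only at the origin (your identification of $d\Psi_0|_{\xi_0(0)}$ with an element of $U(1)\cong SO(2)$, and $\Psi_*T=T$ via the Reeb characterization, matches the paper's pointwise computation), and then conclude by the rigidity of Riemannian isometries: since $\Psi$ preserves $\theta_0$, $d\theta_0$ and $J_0$, it preserves the adapted metric $g_{\theta_0}=h+\theta_0^{2}$, and an isometry of a connected Riemannian manifold with identity $1$-jet at one point is the identity. That argument is valid (the agreement locus is open via $f\circ\exp_q=\exp_{f(q)}\circ df_q$, closed, and nonempty, with no completeness needed); the one point you should make explicit is that the Levi form $h(X,Y)=d\theta_0(X,J_0Y)$ is positive definite on $\xi_0$, so that $g_{\theta_0}$ really is Riemannian. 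The trade-off: the paper's computation is entirely self-contained and delivers the explicit coordinate formula for $\Phi_{p,R}$ along the way, whereas your argument is shorter and more conceptual but imports the standard isometry-rigidity theorem and the observation (true, and consistent with the paper's later use of $g_{\theta_0}$) that pseudohermitian maps are isometries of the adapted metric.
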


\begin{proof}
Let $\Phi:H^{1}\rightarrow H^{1}$ be a pseudohermitian transformation such
that $\Phi(0)=p$. Then the composition $L_{p^{-1}}\circ \Phi$ is a
transformation fixing the origion. Therefore, we reduce the proof of Theorem %
\ref{psgrp} to prove that any pseudohermitian transformation $\Phi$ with $%
\Phi(0)=0$ has the form $\Phi=\Phi_{R}$ for some $R\in SO(2)$. This is
equivalent to prove the following Lemma:

\begin{lem}
\label{balemma} Let $\Phi $ be a pseudohermitian transformation on $H^{1}$
such that $\Phi (0)=0$. Then, for any $p\in H^{1}$, the matrix
representation of $\Phi _{\ast }(p)$ with respect to $(\frac{\partial }{%
\partial x},\frac{\partial }{\partial y},\frac{\partial }{\partial z})$ is
\begin{equation}
\Phi _{\ast }(p)=%
\begin{pmatrix}
\cos \alpha _{0} & -\sin \alpha _{0} & 0 \\
\sin \alpha _{0} & \cos \alpha _{0} & 0 \\
0 & 0 & 1%
\end{pmatrix}%
_{\left( \frac{\partial }{\partial x},\frac{\partial }{\partial y},\frac{%
\partial }{\partial z}\right) },  \label{matrep}
\end{equation}%
for some real constant $\alpha _{0}$ which is independent of $p$. That is $%
\Phi _{\ast }$ is a constant matrix.
\end{lem}

Now we prove Lemma \ref{balemma}. First we compute the matrix representation
of $\Phi _{\ast }(p)$ with respect to $(\mathring{e}_{1},\mathring{e}_{2},T=%
\frac{\partial }{\partial z})$. Since, for $i=1,2,$
\begin{equation*}
\theta _{0}\left( \Phi _{\ast }\mathring{e}_{i}\right) =\left( \Phi ^{\ast
}\theta _{0}\right) \left( \mathring{e}_{i}\right) =\theta _{0}\left(
\mathring{e}_{i}\right) =0,
\end{equation*}%
we see that $\xi _{0}$ is invariant under $\Phi _{\ast }$. Furthermore, let $%
h$ be the Levi metric on $\xi _{0}$ defined by $h(X,Y)=d\theta
_{0}(X,J_{0}Y) $. We have
\begin{equation*}
\begin{split}
\Phi ^{\ast }h(X,Y)& =h(\Phi _{\ast }X,\Phi _{\ast }Y)=d\theta _{0}(\Phi
_{\ast }X,J_{0}\Phi _{\ast }Y) \\
& =d\theta _{0}(\Phi _{\ast }X,\Phi _{\ast }J_{0}Y)=\Phi ^{\ast }(d\theta
_{0})(X,J_{0}Y)=d(\Phi ^{\ast }\theta _{0})(X,J_{0}Y) \\
& =d\theta _{0}(X,J_{0}Y)=h(X,Y).
\end{split}%
\end{equation*}%
That is $h\left( \Phi _{\ast }X,\Phi _{\ast }Y\right) =h\left( X,Y\right) $
for every $X,Y\in \xi _{0}=\ker \theta _{0}$. Thus $\Phi _{\ast }$ is
orthogonal on $\xi _{0}$. On the other hand,
\begin{equation*}
\theta _{0}(\Phi _{\ast }T)=\theta _{0}\left( \Phi _{\ast }\frac{\partial }{%
\partial z}\right) =\left( \Phi ^{\ast }\theta _{0}\right) \left( \frac{%
\partial }{\partial z}\right) =\theta _{0}\left( \frac{\partial }{\partial z}%
\right) =1,
\end{equation*}%
and, for all $X\in \xi _{0}$,
\begin{equation*}
\begin{split}
d\theta _{0}(X,\Phi _{\ast }T)& =d\theta _{0}(\Phi _{\ast }\Phi _{\ast
}^{-1}X,\Phi _{\ast }T)=(\Phi ^{\ast }d\theta _{0})(\Phi _{\ast }^{-1}X,T) \\
& =(d\Phi ^{\ast }\theta _{0})(\Phi _{\ast }^{-1}X,T)=d\theta _{0}(\Phi
_{\ast }^{-1}X,T)=0.
\end{split}%
\end{equation*}%
By the uniqueness of the characteristic vector field, we have $\Phi _{\ast
}T=T$. From the above argument, we conclude that
\begin{equation*}
\Phi _{\ast }(p)=%
\begin{pmatrix}
\cos \alpha \left( p\right) & -\sin \alpha \left( p\right) & 0 \\
\sin \alpha \left( p\right) & \cos \alpha \left( p\right) & 0 \\
0 & 0 & 1%
\end{pmatrix}%
_{\left( \mathring{e}_{1},\mathring{e}_{2},\frac{\partial }{\partial z}%
\right) },
\end{equation*}%
for some real valued function $\alpha $ on $H^{1}$.

Next, let $\Phi =(\Phi ^{1},\Phi ^{2},\Phi ^{3})$, we would like to change
the matrix representation of $\Phi _{\ast }(p)$ from $\left( \mathring{e}%
_{1},\mathring{e}_{2},\frac{\partial }{\partial z}\right) $ to $\left( \frac{%
\partial }{\partial x},\frac{\partial }{\partial y},\frac{\partial }{%
\partial z}\right) $. Let $p=\left( p_{1},p_{2},p_{3}\right) $, $\mathring{e}%
_{1}\left( p\right) =\frac{\partial }{\partial x}+p_{2}\frac{\partial }{%
\partial z}$ and $\mathring{e}_{2}\left( p\right) =\frac{\partial }{\partial
y}-p_{1}\frac{\partial }{\partial z}$, then

\begin{equation*}
\begin{split}
\Phi _{\ast }(p)\left( \frac{\partial }{\partial x}\right) & =\Phi _{\ast
}(p)\left[ \mathring{e}_{1}\left( p\right) -p_{2}\frac{\partial }{\partial z}%
\right] =\Phi _{\ast }(p)\left[ \mathring{e}_{1}\left( p\right) \right]
-p_{2}\frac{\partial }{\partial z} \\
& =\cos \alpha \left( p\right) \mathring{e}_{1}\left[ \Phi (p)\right] +\sin
\alpha \left( p\right) \mathring{e}_{2}\left[ \Phi (p)\right] -p_{2}\frac{%
\partial }{\partial z} \\
& =\cos \alpha \left( p\right) \frac{\partial }{\partial x}+\sin \alpha
\left( p\right) \frac{\partial }{\partial y} \\
& \  \  \ +\left[ \cos \alpha \left( p\right) \Phi ^{2}\left( p\right) -\sin
\alpha \left( p\right) \Phi ^{1}\left( p\right) -p_{2}\right] \frac{\partial
}{\partial z},
\end{split}%
\end{equation*}%
and
\begin{equation*}
\begin{split}
\Phi _{\ast }(p)\left( \frac{\partial }{\partial y}\right) & =\Phi _{\ast
}(p)\left[ \mathring{e}_{2}\left( p\right) +p_{1}\frac{\partial }{\partial z}%
\right] =\Phi _{\ast }(p)\left[ \mathring{e}_{2}\left( p\right) \right]
+p_{1}\frac{\partial }{\partial z} \\
& =-\sin \alpha \left( p\right) \mathring{e}_{1}\left[ \Phi (p)\right] +\cos
\alpha \left( p\right) \mathring{e}_{2}\left[ \Phi (p)\right] +p_{1}\frac{%
\partial }{\partial z} \\
& =-\sin \alpha \left( p\right) \frac{\partial }{\partial x}+\cos \alpha
\left( p\right) \frac{\partial }{\partial y} \\
& \  \  \ +\left[ -\sin \alpha \left( p\right) \Phi ^{2}\left( p\right) -\cos
\alpha \left( p\right) \Phi ^{1}\left( p\right) +p_{1}\right] \frac{\partial
}{\partial z}.
\end{split}%
\end{equation*}%
Thus,
\begin{equation}
\Phi _{\ast }(p)=%
\begin{pmatrix}
\cos \alpha \left( p\right) & -\sin \alpha \left( p\right) & 0 \\
\sin \alpha \left( p\right) & \cos \alpha \left( p\right) & 0 \\
\Phi _{x}^{3}(p) & \Phi _{y}^{3}(p) & 1%
\end{pmatrix}%
_{\left( \frac{\partial }{\partial x},\frac{\partial }{\partial y},\frac{%
\partial }{\partial z}\right) ,}  \label{31}
\end{equation}%
where
\begin{equation}
\begin{split}
\Phi _{x}^{3}(p)& =\cos \alpha \left( p\right) \Phi ^{2}\left( p\right)
-\sin \alpha \left( p\right) \Phi ^{1}\left( p\right) -p_{2}, \\
\Phi _{y}^{3}(p)& =-\sin \alpha \left( p\right) \Phi ^{2}\left( p\right)
-\cos \alpha \left( p\right) \Phi ^{1}\left( p\right) +p_{1}.
\end{split}
\label{32}
\end{equation}%
Observing first that, from (\ref{31}), $\Phi _{z}^{1}=\Phi _{z}^{2}=0$, so
both $\Phi ^{1}$ and $\Phi ^{2}$ are function depending only on $x$ and $y$,
hence so is $\alpha $. Secondly, since $\Phi _{xy}^{1}=\Phi _{yx}^{1}$ and $%
\Phi _{xy}^{2}=\Phi _{yx}^{2}$, we have, from (\ref{31}),
\begin{equation*}
\begin{pmatrix}
\cos \alpha & -\sin \alpha \\
\sin \alpha & \cos \alpha%
\end{pmatrix}%
\begin{pmatrix}
\alpha _{x} \\
\alpha _{y}%
\end{pmatrix}%
=%
\begin{pmatrix}
0 \\
0%
\end{pmatrix}%
,
\end{equation*}%
which implies that $\alpha _{x}=\alpha _{y}=0$. Thus $\alpha $ is a constant
on $H^{1}$, say $\alpha =\alpha _{0}$. From (\ref{31}) again and note that $%
\Phi (0)=0$, we have that
\begin{equation*}
\begin{split}
\Phi ^{1}& =x\cos {\alpha _{0}}-y\sin {\alpha _{0}} \\
\Phi ^{2}& =x\sin {\alpha _{0}}+y\cos {\alpha _{0}},
\end{split}%
\end{equation*}%
which implies that $\Phi _{x}^{3}=\Phi _{y}^{3}=0$. Thus
\begin{equation*}
\Phi _{\ast }(p)=%
\begin{pmatrix}
\cos \alpha _{0} & -\sin \alpha _{0} & 0 \\
\sin \alpha _{0} & \cos \alpha _{0} & 0 \\
0 & 0 & 1%
\end{pmatrix}%
_{\left( \frac{\partial }{\partial x},\frac{\partial }{\partial y},\frac{%
\partial }{\partial z}\right) }.
\end{equation*}%
This completes the proof.
\end{proof}

\subsection{Representation of $PSH(1)$}

We can represent $\Phi_{p,R}$ and points of $H^{1}$, respectively, as
\begin{equation}
\Phi_{p,R}\leftrightarrow M=\left(%
\begin{array}{cccc}
1 & 0 & 0 & 0 \\
p_{1} & a & b & 0 \\
p_{2} & c & d & 0 \\
p_{3} & ap_{2}-cp_{1} & bp_{2}-dp_{1} & 1%
\end{array}%
\right),
\end{equation}
and
\begin{equation}
\left(%
\begin{array}{c}
x \\
y \\
z%
\end{array}%
\right)\leftrightarrow X=\left(%
\begin{array}{c}
1 \\
x \\
y \\
z%
\end{array}%
\right).
\end{equation}
Then
\begin{equation}
MX=\left(%
\begin{array}{c}
1 \\
\Phi_{p,R}\left(%
\begin{array}{c}
x \\
y \\
z%
\end{array}%
\right)%
\end{array}%
\right).
\end{equation}
That is, $PSH(1)$ may be represented as a matrix group by writing
\begin{equation}
PSH(1)=\left \{ M\in GL(4,R)\  \Big{|}\  \ M=\left(%
\begin{array}{cccc}
1 & 0 & 0 & 0 \\
p_{1} & a & b & 0 \\
p_{2} & c & d & 0 \\
p_{3} & ap_{2}-cp_{1} & bp_{2}-dp_{1} & 1%
\end{array}%
\right) \right \}.
\end{equation}

Let $psh(1)$ be the Lie algebra of $PSH(1)$. Then it is easy to see that the
element of $psh(1)$ is look as
\begin{equation}
\left(%
\begin{array}{cccc}
0 & 0 & 0 & 0 \\
x_{1} & 0 & -x_{1}{}^{2} & 0 \\
x_{2} & x_{1}{}^{2} & 0 & 0 \\
x_{3} & x_{2} & -x_{1} & 0%
\end{array}%
\right).
\end{equation}
Therefore the Maurer-Cartan form of $PSH(1)$ is look like
\begin{equation}
\omega =\left(%
\begin{array}{cccc}
0 & 0 & 0 & 0 \\
\omega^{1} & 0 & -\omega_{1}{}^{2} & 0 \\
\omega^{2} & \omega_{1}{}^{2} & 0 & 0 \\
\omega^{3} & \omega^{2} & -\omega^{1} & 0%
\end{array}%
\right),
\end{equation}
here $\omega_{1}^{2}$ and $\omega ^{j}, j=1,2,3$ are 1-forms on $PSH(1)$.

\subsection{The oriented frames on $H^1$}

An oriented frame on $H^{1}$ is a frame of the form
\begin{equation}
(p;X,Y,T),
\end{equation}%
where $p\in H^{1},\ Y=J_{0}X$ and $X\in \xi _{0}(p)$ are unit vectors with
respect to the standard levi metric on $H^{1}$. We can also identify $PSH(1)$
with the space of all oriented frames on $H^{1}$ as following:
\begin{equation}
M=\left(
\begin{array}{cccc}
1 & 0 & 0 & 0 \\
p_{1} & a & b & 0 \\
p_{2} & c & d & 0 \\
p_{3} & ap_{2}-cp_{1} & bp_{2}-dp_{1} & 1%
\end{array}%
\right) \leftrightarrow (p;X,Y,T),
\end{equation}%
where
\begin{equation}
\begin{split}
X& =a\frac{\partial }{\partial x}+c\frac{\partial }{\partial y}%
+(ap_{2}-cp_{1})\frac{\partial }{\partial t} \\
Y& =b\frac{\partial }{\partial x}+d\frac{\partial }{\partial y}%
+(bp_{2}-dp_{1})\frac{\partial }{\partial t} \\
p& =(p_{1},p_{2},p_{3})^{t}.
\end{split}%
\end{equation}%
Actually, we have that $X=a\mathring{e}_{1}(p)+c\mathring{e}_{2}(p)$ and $Y=b%
\mathring{e}_{1}(p)+d\mathring{e}_{2}(p)$, hence $M$ is the unique $4\times
4 $ matrix such that
\begin{equation}
(p;X,Y,T)=(0;\mathring{e}_{1},\mathring{e}_{2},T)M.
\end{equation}

\subsection{Moving frame formula}

Since $PSH(1)$ is a matrix Lie group, the Maurer-Cartan form is to be $%
\omega =M^{-1}dM$ or $dM=M\omega $. Thus we immediately get that
\begin{equation}
(dp;dX,dY,dT)=(p;X,Y,T)\left(
\begin{array}{cccc}
0 & 0 & 0 & 0 \\
\omega ^{1} & 0 & -\omega _{1}{}^{2} & 0 \\
\omega ^{2} & \omega _{1}{}^{2} & 0 & 0 \\
\omega ^{3} & \omega ^{2} & -\omega ^{1} & 0%
\end{array}%
\right) ,
\end{equation}%
that is, we have the following moving frame formula:
\begin{equation}
\begin{split}
dp& =X\omega ^{1}+Y\omega ^{2}+T\omega ^{3} \\
dX& =Y\omega _{1}{}^{2}+T\omega ^{2} \\
dY& =-X\omega _{1}{}^{2}-T\omega ^{1} \\
dT& =0.
\end{split}
\label{movingframe}
\end{equation}

\section{Differential invariants of horizontally regular curves in $H^1$}

\begin{prop}
\label{norpara} We can reparametrize a horizontally regular curve $\gamma(t)$
by a horizontal arc-length $s$
\end{prop}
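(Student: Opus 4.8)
The plan is to carry out the classical arc-length reparametrization, adapted to the horizontal projection. First I would make the velocity decomposition explicit in the left-invariant frame: writing $\gamma(t)=(x(t),y(t),z(t))$ one has $\gamma'(t)=x'\partial_x+y'\partial_y+z'\partial_z$, and since $\mathring{e}_1=\partial_x+y\partial_z$, $\mathring{e}_2=\partial_y-x\partial_z$, $T=\partial_z$, a direct comparison gives $\gamma'(t)=x'(t)\,\mathring{e}_1(\gamma(t))+y'(t)\,\mathring{e}_2(\gamma(t))+\big(z'+xy'-x'y\big)\,T$. Hence the orthogonal projection onto $\xi_0$ along $T$ is $\gamma'_{\xi_0}(t)=x'(t)\,\mathring{e}_1(\gamma(t))+y'(t)\,\mathring{e}_2(\gamma(t))$, and because $\mathring{e}_1,\mathring{e}_2$ are orthonormal for the Levi metric, $|\gamma'_{\xi_0}(t)|=\sqrt{x'(t)^2+y'(t)^2}$. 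This is a smooth function of $t$, and it is strictly positive precisely because $\gamma$ is horizontally regular.

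Next I would fix $t_0\in(a,b)$ and set $s(t)=\int_{t_0}^{t}\sqrt{x'(\tau)^2+y'(\tau)^2}\,d\tau=\int_{t_0}^{t}|\gamma'_{\xi_0}(\tau)|\,d\tau$. By the previous step the integrand is smooth and positive, so $s$ is a smooth strictly increasing function on $(a,b)$ with $s'(t)=|\gamma'_{\xi_0}(t)|>0$; therefore $s$ is a diffeomorphism onto an open interval $I\subset\R$ with smooth inverse $t=t(s)$ satisfying $t'(s)=1/|\gamma'_{\xi_0}(t(s))|$. Define the reparametrized curve $\widetilde{\gamma}(s)=\gamma(t(s))$ on $I$. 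Since the fiberwise projection $v\mapsto v_{\xi_0}$ onto $\xi_0$ along $T$ is linear on each tangent space, it commutes with multiplication by the scalar $t'(s)$, so $\widetilde{\gamma}'_{\xi_0}(s)=t'(s)\,\gamma'_{\xi_0}(t(s))$ and hence $|\widetilde{\gamma}'_{\xi_0}(s)|=t'(s)\,|\gamma'_{\xi_0}(t(s))|=1$ for all $s\in I$. Thus $\widetilde{\gamma}$ is parametrized by horizontal arc-length. For the uniqueness statement, if $\widehat{s}$ is another such parameter then the same computation gives $|d\widehat{s}/ds|\equiv1$, so $\widehat{s}=\pm s+\mathrm{const}$, and requiring the reparametrization to preserve orientation forces $\widehat{s}=s+\mathrm{const}$.

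The argument is entirely routine and I do not anticipate a genuine obstacle; the only points worth stating carefully are the identification $|\gamma'_{\xi_0}(t)|=\sqrt{x'^2+y'^2}$ (which reduces everything to ordinary plane-curve arc-length of the projection $\pi\circ\gamma$) and the observation that the $\xi_0$-projection, being fiberwise linear, transforms by the scalar $t'(s)$ under reparametrization — with these in hand the classical proof applies verbatim.
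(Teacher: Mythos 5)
Your proof is correct and follows essentially the same route as the paper: define $s(t)=\int |\gamma'_{\xi_0}(u)|\,du$, note the integrand is smooth and positive by horizontal regularity, and use the chain rule together with the fiberwise linearity of the $\xi_0$-projection to conclude $|\gamma'_{\xi_0}(s)|=1$. The extra details you supply (the explicit formula $|\gamma'_{\xi_0}(t)|=\sqrt{x'^2+y'^2}$ and the uniqueness of $s$ up to sign and an additive constant) are consistent with what the paper establishes elsewhere.
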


\begin{proof}
Define $s(t)=\int_{0}^{t}|\gamma _{\xi _{0}}^{^{\prime }}(u)|du$. Then any
horizontal arc-length differs $s$ by a constant. By the fundamental theorem
of calculus, we have $\frac{ds}{dt}=|\gamma _{\xi _{0}}^{^{\prime }}(t)|$.
So
\begin{equation}
\frac{d\gamma }{ds}=\frac{d\gamma }{dt}\frac{dt}{ds}=\frac{\gamma ^{^{\prime
}}(t)}{|\gamma _{\xi _{0}}^{^{\prime }}(t)|},
\end{equation}%
hence $\gamma _{\xi _{0}}^{^{\prime }}(s)=\frac{\gamma _{\xi _{0}}^{^{\prime
}}(t)}{|\gamma _{\xi _{0}}^{^{\prime }}(t)|}$, that is $|\gamma _{\xi
_{0}}^{^{\prime }}(s)|=1$.
\end{proof}

\begin{de}
A lift of a mapping $f:M\rightarrow G/H$ is defined to be a map $%
F:M\rightarrow G$ such that the following diagram commutes:

\includegraphics[scale=0.5]{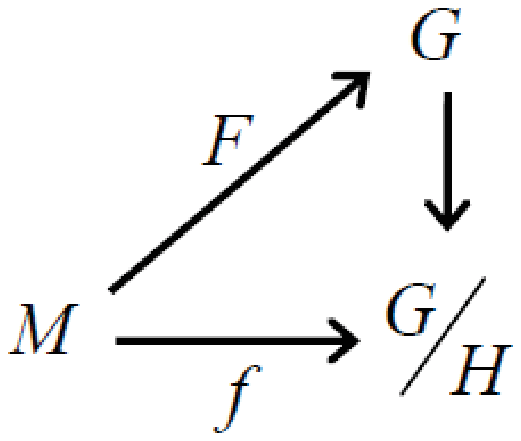}%\\~\\

where $G$ is a Lie group, $H$ is a closed Lie subgroup and $G/H$ is a
homogeneous space. Given a lift $F$ of $f$, any other lift $\tilde{F}%
:M\rightarrow G$ must be of the form
\begin{equation*}
\begin{array}{ccc}
\tilde{F}(x) & = & F(x)g(x)%
\end{array}%
\end{equation*}%
for some map $g:M\rightarrow H$.
\end{de}

\subsection{The Proof of Theorem \protect \ref{main1}}

Let $\gamma (s)$ be a horizontally regular curve with horizontal arc-length
as parameter. For each point of the curve uniquely determines an oriented
frame of $H^{1}$ of the form
\begin{equation}
(\gamma (s);X(s),Y(s),T),
\end{equation}%
where $X(s)=\gamma _{\xi _{0}}^{^{\prime }}(s)$ and $Y(s)=J_{0}X(s)$. Define
$\widetilde{\gamma }(s)$ by
\begin{equation}
\widetilde{\gamma }(s)=(\gamma (s);X(s),Y(s),T).
\end{equation}%
Then $\widetilde{\gamma }(s)$ is a lift of $\gamma (s)$ to $PSH(1)$, which
is uniquely determined by $\gamma (s)$. Let $\omega $ be the Maurer-Cartan
form of $PSH(1)$. We would like to compute the Darboux derivative $%
\widetilde{\gamma }^{\ast }\omega $ of the curve $\widetilde{\gamma }(s)$:

First note that all pull back one-forms by $\widetilde{\gamma }$ are
multiples of $ds$. By (\ref{movingframe}), we have that
\begin{equation}
\begin{split}
d\widetilde{\gamma }(s)& =\widetilde{\gamma }^{\ast }dp \\
& =X(s)\widetilde{\gamma }^{\ast }\omega ^{1}+Y(s)\widetilde{\gamma }^{\ast
}\omega ^{2}+T\widetilde{\gamma }^{\ast }\omega ^{3}.
\end{split}
\label{diffcurve}
\end{equation}%
On the other hand,
\begin{equation}
\begin{split}
d\widetilde{\gamma }(s)& =\gamma _{\xi _{0}}^{^{\prime }}(s)ds+\gamma
_{T}^{^{\prime }}(s)ds \\
& =X(s)ds+\gamma _{T}^{^{\prime }}(s)ds.
\end{split}
\label{diffcurve1}
\end{equation}%
Comparing (\ref{diffcurve}) and (\ref{diffcurve1}), we get
\begin{equation}
\begin{split}
\widetilde{\gamma }^{\ast }\omega ^{1}& =ds,\  \  \widetilde{\gamma }^{\ast
}\omega ^{2}=0 \\
\widetilde{\gamma }^{\ast }\omega ^{3}& =<\gamma ^{^{\prime }}(s),T>ds=\tau
(s)ds.
\end{split}
\label{pulbacform}
\end{equation}%
Again from (\ref{movingframe}), we have
\begin{equation}
dX(s)=Y(s)\widetilde{\gamma }^{\ast }\omega _{1}{}^{2}+T\widetilde{\gamma }%
^{\ast }\omega ^{2}=Y(s)\widetilde{\gamma }^{\ast }\omega _{1}{}^{2},
\end{equation}%
hence
\begin{equation}
\widetilde{\gamma }^{\ast }\omega _{1}{}^{2}=<\frac{dX(s)}{ds}%
,Y(s)>ds=k(s)ds.  \label{pulbacform1}
\end{equation}%
Thus we have already obtained the Darboux derivative of $\widetilde{\gamma }$%
:
\begin{equation}
\widetilde{\gamma }^{\ast }\omega =\left(
\begin{array}{cccc}
0 & 0 & 0 & 0 \\
1 & 0 & -k(s) & 0 \\
0 & k(s) & 0 & 0 \\
\tau (s) & 0 & -1 & 0%
\end{array}%
\right) ds.  \label{fodar}
\end{equation}%
Now suppose that $\gamma _{1}$ and $\gamma _{2}$ have the same $p$-curvature
$k(s)$ and $T$-variation $\tau (s)$. Then, from (\ref{fodar}), we get
\begin{equation*}
\widetilde{\gamma }_{1}^{\ast }\omega =\widetilde{\gamma }_{2}^{\ast }\omega
.
\end{equation*}%
Therefore, by Theorem \ref{ft1}, there exists $g\in PSH(1)$ such that $%
\widetilde{\gamma }_{2}(s)=g\circ \widetilde{\gamma }_{1}(s)$, hence $\gamma
_{2}(s)=g\circ \gamma _{1}(s)$, for all $s$. This completes the uniqueness
up to a group action. To finish the proof of Theorem \ref{main1}, we show
the existence. Given two functions $k(s)$ and $\tau (s)$ defined on an open
interval $I$. Define a $psh(1)$-valued one-form $\varphi $ on $I$ by
\begin{equation*}
\varphi =\left(
\begin{array}{cccc}
0 & 0 & 0 & 0 \\
1 & 0 & -k(s) & 0 \\
0 & k(s) & 0 & 0 \\
\tau (s) & 0 & -1 & 0%
\end{array}%
\right) ds.
\end{equation*}%
Then it is easy to show that $d\varphi +\varphi \wedge \varphi =0$. Thus, by
Theorem \ref{ft2}, there exists a curve
\begin{equation*}
\widetilde{\gamma }(s)=(\gamma (s),X(s),Y(s),T)\in PSH(1)
\end{equation*}%
such that $\widetilde{\gamma }^{\ast }\omega =\varphi $. This means, by
moving frame formula (\ref{movingframe}),
\begin{equation}
\begin{split}
d\gamma (s)& =X(s)ds+\tau (s)Tds \\
dX(s)& =k(s)Y(s)ds \\
dY(s)& =-k(s)X(s)ds-Tds,
\end{split}%
\end{equation}%
which implies that
\begin{equation}
\begin{split}
X(s)& =\gamma _{\xi _{0}}^{^{\prime }}(s),\  \text{and} \\
k(s)& =<\frac{dX(s)}{ds},Y(s)> \\
\tau (s)& =<\frac{d\gamma (s)}{ds},T>.
\end{split}%
\end{equation}%
This completes the proof of the existence.

\subsection{The computation of the $p$-curvature and the $T$-variation}

\label{computofpt} In this subsection, we will compute the $p$-curvature and
the $T$-variation of a horizontally regular curve, and thus give the proof
of Theorem \ref{main2}. After this, we also want to compute the $p$%
-curvature and the $T$-variation of the geodesics of $H^1$. Let $%
\gamma(t)=(x(t),y(t),z(t))$ be a horizontally regular curve. The horizontal
arc-length $s$ is defined by
\begin{equation}
s(t)=\int_{0}^{t}|\gamma_{\xi_{0}}^{^{\prime }}(u)|du,
\end{equation}
where $\gamma_{\xi_{0}}^{^{\prime }}(t)$ is the projection of $%
\gamma^{^{\prime }}(t)$ on $\xi_{0}$ along $T$ direction. Now
\begin{equation}
\begin{split}
\gamma^{^{\prime }}(t)&=(x^{^{\prime }}(t),y^{^{\prime }}(t),z^{^{\prime
}}(t))=x^{^{\prime }}(t)\frac{\partial}{\partial x}+y^{^{\prime }}(t)\frac{%
\partial}{\partial y}+z^{^{\prime }}(t)\frac{\partial}{\partial z} \\
&=x^{^{\prime }}(t)e_{1}+y^{^{\prime }}(t)e_{2}+(z^{^{\prime
}}(t)+xy^{^{\prime }}(t)-yx^{^{\prime }}(t))\frac{\partial}{\partial z},
\end{split}%
\end{equation}
which shows that
\begin{equation}  \label{velexp}
\begin{split}
\gamma_{\xi_{0}}^{^{\prime }}(t)&=x^{^{\prime }}(t)e_{1}+y^{^{\prime
}}(t)e_{2}; \\
\gamma_{T}^{^{\prime }}(t)&=(z^{^{\prime }}(t)+xy^{^{\prime
}}(t)-yx^{^{\prime }}(t))T,
\end{split}%
\end{equation}
where note that $\frac{\partial}{\partial z}=T$. Let $\bar{\gamma}(s)$ be
the reparametrization of $\gamma(t)$ by the horizontal arc-length $s$. Then
we have that $\gamma^{^{\prime }}(t)=\bar{\gamma}^{^{\prime }}(s)\frac{ds}{dt%
}$, hence, comparing with (\ref{velexp}),
\begin{equation}  \label{velexp1}
\begin{split}
\bar{\gamma}_{\xi_{0}}^{^{\prime }}(s)&=\frac{dt}{ds}(x^{^{\prime
}}(t)e_{1}+y^{^{\prime }}(t)e_{2}); \\
\bar{\gamma}_{T}^{^{\prime }}(s)&=\frac{dt}{ds}\left((z^{^{\prime
}}(t)+xy^{^{\prime }}(t)-yx^{^{\prime }}(t))T\right).
\end{split}%
\end{equation}
So the $T$-variation is
\begin{equation}  \label{Tvai}
\begin{split}
\tau(s)&=<\bar{\gamma}^{^{\prime }}(s),T>=<\bar{\gamma}_{T}^{^{\prime
}}(s),T> \\
&=\frac{dt}{ds}(z^{^{\prime }}(t)+xy^{^{\prime }}(t)-yx^{^{\prime }}(t)) \\
&=\frac{xy^{^{\prime }}-x^{^{\prime }}y+z^{^{\prime }}}{\big((x^{^{\prime
}})^{2}+(y^{^{\prime }})^{2}\big)^{\frac{1}{2}}}(t).
\end{split}%
\end{equation}
For the $p$-curvature, first note that $X(s)=\frac{dt}{ds}(x^{^{\prime
}}(t)e_{1}+y^{^{\prime }}(t)e_{2})$, hence $Y(s)=J_{0}X(s)=\frac{dt}{ds}%
(x^{^{\prime }}(t)e_{2}-y^{^{\prime }}(t)e_{1})$. We compute
\begin{equation}
\begin{split}
\frac{dX(s)}{ds}&=\frac{d}{ds}\left(\frac{dt}{ds}\left(x^{^{\prime
}}(t),y^{^{\prime }}(t),x^{^{\prime }}y(t)-xy^{^{\prime }}(t)\right)\right)
\\
&=\left(\frac{dt}{ds}\right)^{2}\left(x^{^{\prime \prime }}(t),y^{^{\prime
\prime }}(t),x^{^{\prime \prime }}y(t)-xy^{^{\prime \prime }}(t)\right)+%
\frac{d^{2}t}{ds^{2}}\left(x^{^{\prime }}(t),y^{^{\prime }}(t),x^{^{\prime
}}y(t)-xy^{^{\prime }}(t)\right) \\
&=\left(x^{^{\prime \prime }}(t)\left(\frac{dt}{ds}\right)^{2}+x^{^{\prime
}}(t)\frac{d^{2}t}{ds^{2}}\right)e_{1}+ \left(y^{^{\prime \prime }}(t)\left(%
\frac{dt}{ds}\right)^{2}+y^{^{\prime }}(t)\frac{d^{2}t}{ds^{2}}\right)e_{2},
\end{split}%
\end{equation}
So
\begin{equation}  \label{pcur}
\begin{split}
k(s)&=<\frac{dX(s)}{ds},Y(s)> \\
&=-\left(x^{^{\prime \prime }}(t)\left(\frac{dt}{ds}\right)^{2}+x^{^{\prime
}}(t)\frac{d^{2}t}{ds^{2}}\right)y^{^{\prime }}(t)\frac{dt}{ds}
+\left(y^{^{\prime \prime }}(t)\left(\frac{dt}{ds}\right)^{2}+y^{^{\prime
}}(t)\frac{d^{2}t}{ds^{2}}\right)x^{^{\prime }}(t)\frac{dt}{ds} \\
&=-\left(x^{^{\prime \prime }}(t)y^{^{\prime }}(t)-x^{^{\prime
}}(t)y^{^{\prime \prime }}(t)\right)\left(\frac{dt}{ds}\right)^3 \\
&=\frac{x^{^{\prime }}y^{^{\prime \prime }}-x^{^{\prime \prime }}y^{^{\prime
}}}{\big((x^{^{\prime }})^{2}+(y^{^{\prime }})^{2}\big)^{\frac{3}{2}}}(t).
\end{split}%
\end{equation}
This completes the proof of Theorem \ref{main2}

Now we make use of (\ref{pcur}) and (\ref{Tvai}) to compute the $p$%
-curvature and $T$-variation of the geodesics in $H^1$. Recall that the
Hamiltonian system on $H^{1}$ for the geodesics is
\begin{equation}  \label{HS}
\begin{array}{lll}
\dot{x}^{k}\left( t\right) & = & h^{kj}\left( x\left( t\right) \right) \xi
_{j}\left( t\right) \\
\dot{\xi}_{k}\left( t\right) & = & -\frac{1}{2}\sum \limits_{i,j=1}^{3}\frac{
\partial h^{ij}\left( x\right) }{\partial x^{k}}\xi _{i}\xi _{j},\text{ }
k=1,2,3,%
\end{array}%
\end{equation}
where
\begin{equation*}
h^{ij}\left( x^{1},x^{2},x^{3}\right) =
\begin{pmatrix}
1 & 0 & x^{2} \\
0 & 1 & -x^{1} \\
x^{2} & -x^{1} & \left( x^{1}\right) ^{2}+\left( x^{2}\right) ^{2}%
\end{pmatrix}
.\qquad
\end{equation*}
So the Hamiltonian system (\ref{HS}) can be expressed by
\begin{equation}
\begin{split}
\dot{x}^{1}\left( t\right) & =\xi _{1}+x^{2}\xi _{3} \\
\dot{x}^{2}\left( t\right) & =\xi _{2}-x^{1}\xi _{3} \\
\dot{x}^{3}\left( t\right) & =x^{2}\xi _{1}-x^{1}\xi _{2}+\xi _{3}\left[
\left( x^{1}\right) ^{2}+\left( x^{2}\right) ^{2}\right] \\
\dot{\xi}_{1}\left( t\right) & =\xi _{2}\xi _{3}-x^{1}\xi _{3}^{2} \\
\dot{\xi}_{2}\left( t\right) & =-\xi _{1}\xi _{3}-x^{2}\xi _{3}^{2} \\
\dot{\xi}_{3}\left( t\right) & =0.
\end{split}%
\end{equation}
Since $\dot{\xi}_{3}\left( t\right) =0,$ thus $\xi _{3}\left( t\right)
=c_{3} $ where $c_{3}$ is some constant. In the case $c_{3}=0$, we have that
$x\left( t\right) =\left( c_{1}t+d_{1},c_{2}t+d_{2},\left(
c_{1}d_{2}-c_{2}d_{1}\right) t+d_{3}\right)$, thus $k\left( t\right) =0$ and
$\tau \left( t\right) =0.$ Next, in the case $c_{3}> 0$, we have
\begin{equation}  \label{gode1}
\begin{split}
x\left( t\right) & = \left( x^{1}\left( t\right) ,x^{2}\left( t\right)
,x^{3}\left( t\right) \right),\  \text{where} \\
x^{1}\left( t\right) & = a_{1}\sin \left( 2c_{3}t\right) +a_{2}\cos \left(
2c_{3}t\right) +d_{1} \\
x^{2}\left( t\right) & = -a_{2}\sin \left( 2c_{3}t\right) +a_{1}\cos \left(
2c_{3}t\right) +d_{2} \\
x^{3}\left( t\right) & = \left( a_{2}d_{1}+a_{1}d_{2}\right) \sin \left(
2c_{3}t\right) +\left( a_{2}d_{2}-a_{1}d_{1}\right) \cos \left(
2c_{3}t\right) \\
& +2c_{3}\left( a_{1}^{2}+a_{2}^{2}\right) t+d_{3},
\end{split}%
\end{equation}
hence $k\left( t\right) =-\frac{1}{\left[ \left( a_{1}^{2}+a_{2}^{2}\right) %
\right] ^{\frac{1}{2}}}<0$ and $\tau \left( t\right) =0.$ Finally, in the
case $c_{3}< 0$, we have
\begin{equation}  \label{gode2}
\begin{split}
x\left( t\right) & = \left( x^{1}\left( t\right) ,x^{2}\left( t\right)
,x^{3}\left( t\right) \right),\  \text{where} \\
x^{1}\left( t\right) & = a_{1}\sin \left(-2c_{3}t\right) +a_{2}\cos \left(-
2c_{3}t\right) +d_{1} \\
x^{2}\left( t\right) & = a_{2}\sin \left(-2c_{3}t\right) -a_{1}\cos \left(
-2c_{3}t\right) +d_{2} \\
x^{3}\left( t\right) & = \left( a_{1}d_{1}+a_{2}d_{2}\right) \sin \left(
-2c_{3}t\right) -\left( a_{2}d_{1}-a_{1}d_{2}\right) \cos \left(
-2c_{3}t\right) \\
& +2c_{3}\left( a_{1}^{2}+a_{2}^{2}\right) t+d_{3},
\end{split}%
\end{equation}
hence $k\left( t\right) =\frac{1}{\left[ \left( a_{1}^{2}+a_{2}^{2}\right) %
\right] ^{\frac{1}{2}}}>0$ and $\tau \left( t\right) =0.$

The above computation shows that a horizontal curve is congruent to a godeic
if it has positive constant $p$-curvature. Conversely, it is easy to see
that a symmetry action of a geodesic is still a geodesic. Therefore we
complete the proof of Theorem \ref{chaofgeo}.

\begin{rem}
Actually, the geodesics (\ref{gode1}) for $c_{3}>0$ are the reverse of the
geodesics (\ref{gode2}) for $c_{3}<0$. That is, they run in the reverse
direction of each other.
\end{rem}

\section{Differential invariants of parametrized surfaces in $H^1$}

\label{invpasur}

\subsection{The proof of Theorem \protect \ref{main4}}

First we show the uniqueness. Let $F:U\rightarrow H^{1}$ be a normal
parametrized surface with $a,b,c,l$ and $m$ as the coefficients. That is,
\begin{equation}
\begin{array}{lll}
a=<F_{v},X> & b=<F_{v},Y> & c=<F_{v},T> \\
l=<F_{uu},Y> & m=<F_{uv},Y>. &
\end{array}%
\end{equation}%
Defining the unique lift $\widetilde{F}$ of $F$ to $PSH(1)$ as
\begin{equation}
\widetilde{F}=<F,X,Y,T>,\ X=F_{u},\ JX=Y,
\end{equation}%
we would like to compute the Darboux derivative $\widetilde{F}^{\ast }\omega
$ of $\widetilde{F}$: By the moving frame formula (\ref{movingframe}), we
see that
\begin{equation}
\begin{split}
dF(u,v)& =X(\widetilde{F}^{\ast }\omega ^{1})+Y(\widetilde{F}^{\ast }\omega
^{2})+T(\widetilde{F}^{\ast }\omega ^{3}) \\
& =F_{u}du+F_{v}dv.
\end{split}%
\end{equation}%
This implies that
\begin{equation}
\begin{split}
F_{u}& =dF(\frac{\partial }{\partial u})=X(\widetilde{F}^{\ast }\omega ^{1})(%
\frac{\partial }{\partial u})+Y(\widetilde{F}^{\ast }\omega ^{2})(\frac{%
\partial }{\partial u})+T(\widetilde{F}^{\ast }\omega ^{3})(\frac{\partial }{%
\partial u}); \\
F_{v}& =dF(\frac{\partial }{\partial v})=X(\widetilde{F}^{\ast }\omega ^{1})(%
\frac{\partial }{\partial v})+Y(\widetilde{F}^{\ast }\omega ^{2})(\frac{%
\partial }{\partial v})+T(\widetilde{F}^{\ast }\omega ^{3})(\frac{\partial }{%
\partial v}),
\end{split}%
\end{equation}%
hence, comparing the coefficients and note that $F_{u}=X$, we have
\begin{equation}
(\widetilde{F}^{\ast }\omega ^{1})(\frac{\partial }{\partial u})=1,\  \ (%
\widetilde{F}^{\ast }\omega ^{2})(\frac{\partial }{\partial u})=(\widetilde{F%
}^{\ast }\omega ^{3})(\frac{\partial }{\partial u})=0,  \label{coeff1}
\end{equation}%
and
\begin{equation}
\begin{split}
(\widetilde{F}^{\ast }\omega ^{1})(\frac{\partial }{\partial v})&
=<F_{v},X>=a \\
(\widetilde{F}^{\ast }\omega ^{2})(\frac{\partial }{\partial v})&
=<F_{v},Y>=b \\
(\widetilde{F}^{\ast }\omega ^{3})(\frac{\partial }{\partial v})&
=<F_{v},T>=c.
\end{split}
\label{coeff2}
\end{equation}%
From (\ref{coeff1}) and (\ref{coeff2}), we get
\begin{equation}
\begin{split}
\widetilde{F}^{\ast }\omega ^{1}& =du+adv \\
\widetilde{F}^{\ast }\omega ^{2}& =bdv \\
\widetilde{F}^{\ast }\omega ^{3}& =cdv.
\end{split}
\label{Darder1}
\end{equation}%
On the other hand, again using the moving frame formula (\ref{movingframe}),
\begin{equation}
\begin{split}
dX(u,v)& =Y(\widetilde{F}^{\ast }\omega _{1}^{2})+T(\widetilde{F}^{\ast
}\omega ^{2}) \\
& =(\widetilde{F}^{\ast }\omega _{1}^{2})(\frac{\partial }{\partial u})Ydu+(%
\widetilde{F}^{\ast }\omega _{1}^{2})(\frac{\partial }{\partial v})Ydv+bTdv.
\end{split}%
\end{equation}%
Note again that $X=F_{u}$, we have
\begin{equation}
dX(u,v)=dF_{u}(u,v)=F_{uu}du+F_{uv}dv.
\end{equation}%
Comparing the above two formulae, we obtain
\begin{equation}
\begin{split}
(\widetilde{F}^{\ast }\omega _{1}^{2})(\frac{\partial }{\partial u})&
=<F_{uu},Y>=l \\
(\widetilde{F}^{\ast }\omega _{1}^{2})(\frac{\partial }{\partial v})&
=<F_{uv},Y>=m \\
b& =<F_{uv},T> \\
0& =<F_{uv},X>=<F_{uu},X>=<F_{uu},T>.
\end{split}
\label{Darder2}
\end{equation}%
In particular, combining (\ref{Darder1}) and (\ref{Darder2}), we get the
Darboux derivative $\widetilde{F}^{\ast }\omega $ which is
\begin{equation}
\widetilde{F}^{\ast }\omega =\left(
\begin{array}{cccc}
0 & 0 & 0 & 0 \\
du+adv & 0 & -ldu-mdv & 0 \\
bdv & ldu+mdv & 0 & 0 \\
cdv & bdv & -du-adv & 0%
\end{array}%
\right) .  \label{Darder9}
\end{equation}%
This completes the proof of the uniqueness. Now we prove the existence.
Suppose $a,b,c$ and $m,l$ are functions defined on $U$. Define a $psh(1)$%
-valued one form $\phi $ by
\begin{equation}
\phi =\left(
\begin{array}{cccc}
0 & 0 & 0 & 0 \\
du+adv & 0 & -ldu-mdv & 0 \\
bdv & ldu+mdv & 0 & 0 \\
cdv & bdv & -du-adv & 0%
\end{array}%
\right) .
\end{equation}%
Then we have
\begin{equation}
d\phi =\left(
\begin{array}{cccc}
0 & 0 & 0 & 0 \\
\frac{\partial a}{\partial u} & 0 & \frac{\partial l}{\partial v}-\frac{%
\partial m}{\partial u} & 0 \\
\frac{\partial b}{\partial u} & -\frac{\partial l}{\partial v}+\frac{%
\partial m}{\partial u} & 0 & 0 \\
\frac{\partial c}{\partial u} & \frac{\partial b}{\partial u} & -\frac{%
\partial a}{\partial u} & 0%
\end{array}%
\right) du\wedge dv
\end{equation}%
and
\begin{equation}
\phi \wedge \phi =\left(
\begin{array}{cccc}
0 & 0 & 0 & 0 \\
-lb & 0 & 0 & 0 \\
al-m & 0 & 0 & 0 \\
-2b & -m+al & bl & 0%
\end{array}%
\right) du\wedge dv.
\end{equation}%
Therefore we get that $\phi $ satisfies the integrability condition $d\phi
=-\phi \wedge \phi $ if and only lf $a,b,c,l$ and $m$ satisfy the
integrability condition (\ref{intcons1}). Therefore, by Theorem \ref{ft2},
there exists a map
\begin{equation*}
\widetilde{F}^{\ast }(u,v)=\left( F(u,v),X(u,v),Y(u,v),T\right)
\end{equation*}%
such that $\widetilde{F}^{\ast }\omega =\phi $. Thus, by the moving frame
formula (\ref{movingframe}), we see that $F:U\rightarrow H^{1}$ is a map
with $a,b,c,l$ and $m$ as its coefficients.

\subsection{Invariants of surfaces}

Let $\Sigma \hookrightarrow H^{1}$ be a surface such that each point of $%
\Sigma $ is regular. For each point $p\in \Sigma $, one can choose a
parametrization $F:U\rightarrow \Sigma $ with coordinates $(u,v)$ such that
\begin{equation}
F_{u}=\frac{\partial F}{\partial u}=X,
\end{equation}%
where $X$ is an unit vector field defining the characteristic foliation
around $p$. We call $F$ and $(u,v)$ a normal parametrization and a normal
coordinates around $p$, respectively.

\begin{lem}
\label{norcor} The normal coordinates is determined up to a transformation
of the form
\begin{equation}
\begin{split}
\widetilde{u}& =\pm u+g(v) \\
\widetilde{v}& =h(v),
\end{split}
\label{tlofnormal}
\end{equation}%
for some smooth functions $g(v),h(v)$ such that $\frac{\partial h}{\partial v%
}\neq 0$.
\end{lem}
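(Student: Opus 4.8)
The plan is to characterize normal coordinates intrinsically and then see which changes of coordinates preserve that characterization. Recall that a normal coordinate system $(u,v)$ on $\Sigma$ is required to satisfy: (i) $F(U)$ has no singular points; (ii) $F_u$ is everywhere tangent to the characteristic foliation; (iii) $|F_u|=1$ with respect to the Levi metric. Suppose $(u,v)$ and $(\widetilde u,\widetilde v)$ are two normal coordinate systems on the same piece of surface, related by a diffeomorphism $\widetilde u=\widetilde u(u,v)$, $\widetilde v=\widetilde v(u,v)$. The idea is to exploit conditions (ii) and (iii) to pin down the Jacobian.

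First I would use condition (ii). Since both $F_u$ and $F_{\widetilde u}$ span the (one-dimensional) characteristic distribution $T\Sigma\cap\xi_0$ at each point, they are parallel; writing $\partial/\partial u = (\partial\widetilde u/\partial u)\,\partial/\partial\widetilde u + (\partial\widetilde v/\partial u)\,\partial/\partial\widetilde v$ and pushing forward by $F$, the fact that $F_u$ has no $F_{\widetilde v}$-component (because $F_{\widetilde v}$ has a genuinely transverse component to the characteristic line — indeed $\langle F_{\widetilde v},T\rangle = c \neq 0$ since the surface is non-singular, cf.\ the discussion after \eqref{coeofform}) forces $\partial\widetilde v/\partial u = 0$. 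Hence $\widetilde v$ is a function of $v$ alone, $\widetilde v = h(v)$, and $\partial h/\partial v\neq 0$ since the change of coordinates is a diffeomorphism. With this in hand, $F_u = (\partial\widetilde u/\partial u)\,F_{\widetilde u}$, and now condition (iii), $|F_u| = |F_{\widetilde u}| = 1$, gives $|\partial\widetilde u/\partial u| = 1$, so $\partial\widetilde u/\partial u = \pm 1$ (and the sign is locally constant, hence constant on a connected domain). Integrating in $u$ yields $\widetilde u = \pm u + g(v)$ for some smooth function $g(v)$. Conversely, any transformation of the form \eqref{tlofnormal} visibly preserves (i)–(iii): $F_{\widetilde u} = \pm F_u$ is still unit-length and still tangent to the characteristic foliation, and non-singularity is a property of the surface, not the parametrization. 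This establishes the claimed normal form.

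The main obstacle — really the only subtle point — is justifying that $\partial\widetilde v/\partial u = 0$, i.e.\ that $F_{\widetilde v}$ is genuinely transverse to the characteristic direction. This is exactly where non-singularity of $\Sigma$ enters: at a non-singular point the tangent plane $T_p\Sigma$ does not coincide with $\xi_0(p)$, so $T_p\Sigma\cap\xi_0(p)$ is one-dimensional and any second coordinate vector field must stick out of $\xi_0$; concretely $c = \langle F_{\widetilde v},T\rangle$ cannot vanish, for otherwise $F_{\widetilde v}\in\xi_0$ and together with $F_{\widetilde u}\in\xi_0$ we would get $T_p\Sigma\subset\xi_0(p)$, contradicting regularity. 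Once this transversality is secured, the remaining steps are immediate consequences of the two defining normalizations (ii) and (iii), and no computation beyond the chain rule is needed.
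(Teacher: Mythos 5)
Your proposal is correct and follows essentially the same route as the paper: use the $T$-component of the chain-rule expansion of $F_u$, together with $\langle F_{\widetilde v},T\rangle\neq 0$ at non-singular points, to force $\partial\widetilde v/\partial u=0$, then compare the remaining (unit) characteristic components to get $\partial\widetilde u/\partial u=\pm 1$. Your explicit justification that $c\neq 0$ (otherwise $T_p\Sigma\subset\xi_0(p)$) is a slightly fuller version of the paper's one-line appeal to regularity, but the argument is the same.
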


\begin{proof}
Suppose that $(\widetilde{u},\widetilde{v})$ is another normal coordinates
around $p$, i.e.,
\begin{equation}
F_{\widetilde{u}}=\widetilde{X},
\end{equation}
where $\widetilde{X}=\pm X$. We have
\begin{equation}  \label{tlofnorpar}
\begin{split}
F_{u}&=F_{\widetilde{u}}\frac{\partial \widetilde{u}}{\partial u}+F_{%
\widetilde{v}}\frac{\partial \widetilde{v}}{\partial u} \\
F_{v}&=F_{\widetilde{u}}\frac{\partial \widetilde{u}}{\partial v}+F_{%
\widetilde{v}}\frac{\partial \widetilde{v}}{\partial v}.
\end{split}%
\end{equation}
Expand $F_{\widetilde{v}}=\widetilde{a}\widetilde{X}+\widetilde{b}\widetilde{%
Y}+\widetilde{c}\widetilde{T}$. By the first identity of (\ref{tlofnorpar}),
we have
\begin{equation}
\begin{split}
X&=\widetilde{X}\frac{\partial \widetilde{u}}{\partial u}+\left(\widetilde{a}%
\frac{\partial \widetilde{v}}{\partial u}\widetilde{X} +\widetilde{b}\frac{%
\partial \widetilde{v}}{\partial u}\widetilde{Y}+\widetilde{c}\frac{\partial
\widetilde{v}}{\partial u}\widetilde{T}\right) \\
&=\left(\frac{\partial \widetilde{u}}{\partial u}+\widetilde{a}\frac{%
\partial \widetilde{v}}{\partial u}\right)\widetilde{X}+ \widetilde{b}\frac{%
\partial \widetilde{v}}{\partial u}\widetilde{Y}+\widetilde{c}\frac{\partial
\widetilde{v}}{\partial u}\widetilde{T}.
\end{split}%
\end{equation}
Since $p$ is regular, we see that $\widetilde{c}\neq 0$ around $p$, we
conclude from the above formula
\begin{equation}
\frac{\partial \widetilde{v}}{\partial u}=0,\  \text{that is,}\  \  \widetilde{v%
}=h(v),
\end{equation}
for some function $h(v)$. In addition, comparing the coefficient of $X$, we
have
\begin{equation}
\pm 1=\frac{\partial \widetilde{u}}{\partial u}+\widetilde{a}\frac{\partial
\widetilde{v}}{\partial u}=\frac{\partial \widetilde{u}}{\partial u},
\end{equation}
hence $\widetilde{u}=\pm u+g(v)$ for some function $g(v)$. Finally we
compute
\begin{equation}
\det{\left(%
\begin{array}{cc}
\frac{\partial \widetilde{u}}{\partial u} & \frac{\partial \widetilde{u}}{%
\partial v} \\
\frac{\partial \widetilde{v}}{\partial u} & \frac{\partial \widetilde{v}}{%
\partial v}%
\end{array}%
\right)}=\det{\left(%
\begin{array}{cc}
\pm 1 & \frac{\partial g}{\partial v} \\
0 & \frac{\partial h}{\partial v}%
\end{array}%
\right)}=\pm \frac{\partial h}{\partial v}\neq 0.
\end{equation}
This completes the proof.
\end{proof}

Recall that by means of a normal parametrization $F$, we compute the Darboux
derivative $\widetilde{F}^{*}\omega$ as (\ref{Darder9}). One can define four
one-forms on $\Sigma$ locally as follows:
\begin{equation}  \label{ffs}
\begin{array}{ccl}
I=\widetilde{F}^{*}\omega^{1}=du+adv, & II=\widetilde{F}^{*}\omega^{2}=bdv,
& III=\widetilde{F}^{*}\omega^{3}=cdv \\
IV=\widetilde{F}^{*}\omega_{1}^{2}=ldu+mdv, &  &
\end{array}%
\end{equation}
where functions $a, b, c, m$ and $l$ are defined as (\ref{coeofform}). Let $(%
\widetilde{u},\widetilde{v})$ be another normal coordinates around $p$, we
have

\begin{prop}
\label{propofform}
\begin{equation*}
\widetilde{I}=\pm I,\  \widetilde{II}=\pm II,\  \widetilde{III}=III,\  \text{and%
}\  \widetilde{IV}=IV.
\end{equation*}
\end{prop}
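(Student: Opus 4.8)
The plan is to combine Lemma~\ref{norcor}, which says that the only freedom in a normal coordinate system is $\widetilde{u}=\epsilon u+g(v)$, $\widetilde{v}=h(v)$ with $\epsilon=\pm 1$ and $h'(v)\neq 0$, with the elementary observation that the associated adapted frame changes in the mildest possible way. Indeed $F_{\widetilde{u}}=\widetilde{X}=\epsilon X$, so by linearity of $J_{0}$ we also get $\widetilde{Y}=J_{0}\widetilde{X}=\epsilon Y$, while $\widetilde{T}=T$; here $\epsilon$ is locally constant. In other words, up to the coordinate change the new lift $\widetilde{F}'=(F,\widetilde{X},\widetilde{Y},T)$ is the old lift $\widetilde{F}=(F,X,Y,T)$ followed by right translation by the constant element $\mathrm{diag}(1,\epsilon,\epsilon,1)\in PSH(1)$, so there is essentially no new content beyond bookkeeping.

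First I would record $d\widetilde{u}=\epsilon\,du+g'(v)\,dv$ and $d\widetilde{v}=h'(v)\,dv$. Next I would express the new first-kind coefficients in terms of the old ones: inverting the coordinate change gives $\partial u/\partial\widetilde{u}=\epsilon$, $\partial v/\partial\widetilde{u}=0$, $\partial u/\partial\widetilde{v}=-\epsilon g'/h'$, $\partial v/\partial\widetilde{v}=1/h'$, so writing $F_{\widetilde{v}}=F_{u}\,(\partial u/\partial\widetilde{v})+F_{v}\,(\partial v/\partial\widetilde{v})$ in the basis $X,Y,T$ and pairing against $\widetilde{X}=\epsilon X$, $\widetilde{Y}=\epsilon Y$, $\widetilde{T}=T$ yields $\widetilde{a}=(\epsilon a-g')/h'$, $\widetilde{b}=\epsilon b/h'$, $\widetilde{c}=c/h'$. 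For the second-kind coefficients I would differentiate $F_{\widetilde{u}}=\epsilon X$ once more; the chain rule collapses (because $\partial v/\partial\widetilde{u}=0$ and $\epsilon^{2}=1$) to $F_{\widetilde{u}\widetilde{u}}=F_{uu}$ and $F_{\widetilde{u}\widetilde{v}}=(\epsilon F_{uv}-g'F_{uu})/h'$, whence $\widetilde{l}=\langle F_{\widetilde{u}\widetilde{u}},\widetilde{Y}\rangle=\epsilon l$ and $\widetilde{m}=(m-\epsilon g'l)/h'$.

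Finally I would substitute these into the definitions~(\ref{ffs}). The mechanism is that in each form the unwanted $g'$ and $h'$ contributions cancel: $\widetilde{I}=d\widetilde{u}+\widetilde{a}\,d\widetilde{v}=\epsilon(du+a\,dv)=\epsilon I$, $\widetilde{II}=\widetilde{b}\,d\widetilde{v}=\epsilon\,II$, $\widetilde{III}=\widetilde{c}\,d\widetilde{v}=III$, and $\widetilde{IV}=\widetilde{l}\,d\widetilde{u}+\widetilde{m}\,d\widetilde{v}=l\,du+m\,dv=IV$, which is precisely the assertion. Equivalently, and a little more conceptually, one may pull the matrix~(\ref{Darder9}) back by the coordinate change and then apply $\mathrm{Ad}$ of the constant group element $\mathrm{diag}(1,\epsilon,\epsilon,1)$, reading the transformation rules off the entries. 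The only real obstacle is sign bookkeeping: one must verify that the $\pm$ in $\widetilde{u}=\pm u+g(v)$ is forced to agree with the $\pm$ in $\widetilde{X}=\pm X$ (which follows from $F_{\widetilde{u}}=X\,(\partial u/\partial\widetilde{u})$ together with $|\widetilde{X}|=1$), and one must carry $\epsilon$ carefully through the two derivatives defining $l$ and $m$; everything else is a routine computation.
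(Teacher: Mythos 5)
Your proposal is correct and follows essentially the same route as the paper: invoke Lemma~\ref{norcor} for the allowed coordinate changes, derive the transformation laws of $a,b,c,l,m$ via the chain rule, and substitute into the definitions~(\ref{ffs}). The only cosmetic difference is that you express the tilde'd coefficients in terms of the untilde'd ones (the paper does the reverse) and you write out explicitly the cancellation of the $g'$ and $h'$ terms that the paper leaves as ``easy to see''; your formulas $\widetilde{a}=(\epsilon a-g')/h'$, $\widetilde{b}=\epsilon b/h'$, $\widetilde{c}=c/h'$, $\widetilde{l}=\epsilon l$, $\widetilde{m}=(m-\epsilon g'l)/h'$ are exactly the inverses of the paper's (\ref{tlofc})--(\ref{tlofc3}).
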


\begin{proof}
From the definition of normal coordinates, we see that $F_{\widetilde{u}}=%
\widetilde{X}=\pm X$. By definition
\begin{equation}
\begin{array}{ccl}
\widetilde{I}=d\widetilde{u}+\widetilde{a}d\widetilde{v} & \widetilde{II}=%
\widetilde{b}d\widetilde{v} & \widetilde{III}=\widetilde{c}d\widetilde{v} \\
\widetilde{IV}=\widetilde{l}d\widetilde{u}+\widetilde{m}d\widetilde{v}, &  &
\end{array}%
\end{equation}
where
\begin{equation}
\widetilde{a}=<F_{\widetilde{v}},\widetilde{X}>,\  \widetilde{b}=<F_{%
\widetilde{v}},\widetilde{Y}>,\  \widetilde{c}=<F_{\widetilde{v}},T>,
\end{equation}
and
\begin{equation}
\widetilde{l}=<F_{\widetilde{u}\widetilde{u}},\widetilde{Y}>,\  \widetilde{m}%
=<F_{\widetilde{u}\widetilde{v}},\widetilde{Y}>, \widetilde{Y}=J_{0}%
\widetilde{X}=\pm Y.
\end{equation}
By lemma \ref{norcor}, there exists functions $g(v)$ and $h(v)$ such that
\begin{equation}
\begin{split}
\widetilde{u}&=\pm u+g(v) \\
\widetilde{v}&=h(v),
\end{split}%
\end{equation}
We compute the transformation laws of the coefficients of the fundamental
forms:
\begin{equation}  \label{tlofc}
\begin{split}
a&=<F_{v},X>=<F_{\widetilde{u}}\frac{\partial \widetilde{u}}{\partial v}+F_{%
\widetilde{v}}\frac{\partial \widetilde{v}}{\partial v},X> \\
&=<\pm X\frac{\partial g}{\partial v}+F_{\widetilde{v}}\frac{\partial h}{%
\partial v},X> \\
&=\pm \left(\frac{\partial g}{\partial v}+\frac{\partial h}{\partial v}%
\widetilde{a}\right).
\end{split}%
\end{equation}
Similarly, we have
\begin{equation}  \label{tlofc1}
b=\pm \frac{\partial h}{\partial v}\widetilde{b},\ c=\frac{\partial h}{%
\partial v}\widetilde{c}.
\end{equation}
On the other hand, note that $F_{u}=\pm F_{\widetilde{u}}$, hence $%
F_{uu}=\pm(F_{\widetilde{u}\widetilde{u}}\frac{\partial \widetilde{u}}{%
\partial u} +F_{\widetilde{u}\widetilde{v}}\frac{\partial \widetilde{v}}{%
\partial u})=F_{\widetilde{u}\widetilde{u}}$. Thus
\begin{equation}  \label{tlofc2}
l=\pm \widetilde{l}.
\end{equation}
Similarly we have
\begin{equation}  \label{tlofc3}
m=\frac{\partial g}{\partial v}\widetilde{l}+\frac{\partial h}{\partial v}%
\widetilde{m}.
\end{equation}
From the transformation laws (\ref{tlofc}), (\ref{tlofc1}), (\ref{tlofc2})
and (\ref{tlofc3}), it is easy to see that
\begin{equation*}
\widetilde{I}=\pm I,\  \widetilde{II}=\pm II,\  \widetilde{III}=III,\  \text{and%
}\  \widetilde{IV}=IV.
\end{equation*}
This finishes the proof of the proposition.
\end{proof}

Define $\alpha=\frac{b}{c}$ and $\widetilde{\alpha}=\frac{\widetilde{b}}{%
\widetilde{c}}$, then from (\ref{tlofc1}), we see that $\alpha=\pm
\widetilde{\alpha}$. Actually, $\alpha$ is the function defined on the
non-singular part of $\Sigma$ such that $\alpha e_{2}+T\in T\Sigma$. Up to a
sign, $\alpha $ is a function which is independent of the choice of the
normal coordinates, hence an invariant of $\Sigma$ on the non-singular part.
Similarly, from (\ref{tlofc2}), so is for $l$, which actually is the $p$%
-mean curvature.

\begin{rem}
Note that if we restrict us to choose normal coordinates with respect to a
fixed orientation of the characteristic foliation on the nonsingular part,
we see, from the proof of Proposition \ref{propofform}, that $\alpha =%
\widetilde{\alpha }$ and $l=\widetilde{l}$. That is, the sign appearing is
due to the different choice of orientation.
\end{rem}

Besides the two invariants $\alpha$ and $l$, we now proceed to introduce
another invariant of $\Sigma$, which is defined on all of $\Sigma$, not just
on the non-singular part. Again, from Proposition \ref{propofform}, it is
easy to see that
\begin{equation}
I\otimes I+II\otimes II+III\otimes III=\widetilde{I}\otimes \widetilde{I}+%
\widetilde{II}\otimes \widetilde{II}+\widetilde{III}\otimes \widetilde{III}.
\end{equation}
Therefore the form $I\otimes I+II\otimes II+III\otimes III$ is again
independent of the choice of a normal coordinates, hence also an invaiant of
$\Sigma$

\begin{lem}
Let $g_{\theta_{0}}$ be the adapted metric on $H^1$. Then we have
\begin{equation}
g_{\theta_{0}}|_{\Sigma}=I\otimes I+II\otimes II+III\otimes III,
\end{equation}
on the non-singular part of $\Sigma$.
\end{lem}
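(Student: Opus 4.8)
The statement is a pointwise identity of symmetric 2-tensors on the non-singular part of $\Sigma$, so it suffices to verify it on a basis of each tangent space $T_p\Sigma$. The natural basis to use is $\{F_u, F_v\} = \{X, F_v\}$, since we are working in normal coordinates where $F_u = X$. The plan is to evaluate both sides of the claimed equality on the three pairs $(F_u,F_u)$, $(F_u,F_v)$, $(F_v,F_v)$, and check they agree.

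First I would recall what the adapted metric $g_{\theta_0}$ is: it is the Riemannian metric on $H^1$ that extends the Levi metric $h$ on $\xi_0$ by declaring $T$ to be a unit vector orthogonal to $\xi_0$. Thus, with respect to the frame $(X,Y,T)$ — which is orthonormal for $g_{\theta_0}$ by construction, since $X,Y$ span $\xi_0$ and are $h$-orthonormal with $Y = J_0 X$ — any tangent vector $W$ to $H^1$ decomposes as $W = \langle W,X\rangle X + \langle W,Y\rangle Y + \langle W,T\rangle T$ and $g_{\theta_0}(W,W) = \langle W,X\rangle^2 + \langle W,Y\rangle^2 + \langle W,T\rangle^2$. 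Now I compute the right-hand side on the basis. From the definitions in~(\ref{ffs}), $I = du + a\,dv$, $II = b\,dv$, $III = c\,dv$, so $I(F_u) = 1$, $II(F_u) = 0$, $III(F_u) = 0$, while $I(F_v) = a$, $II(F_v) = b$, $III(F_v) = c$. Hence $(I\otimes I + II\otimes II + III\otimes III)(F_u,F_u) = 1$, $(\cdot)(F_u,F_v) = a$, and $(\cdot)(F_v,F_v) = a^2 + b^2 + c^2$.

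Next I compute the left-hand side, i.e. $g_{\theta_0}$ restricted to $\Sigma$, on the same basis. Since $F_u = X$ is a unit horizontal vector, $g_{\theta_0}(F_u,F_u) = 1$. Since $F_v = \langle F_v,X\rangle X + \langle F_v,Y\rangle Y + \langle F_v,T\rangle T = aX + bY + cT$ by the very definition~(\ref{coeofform}) of $a,b,c$, orthonormality of $(X,Y,T)$ gives $g_{\theta_0}(F_u,F_v) = g_{\theta_0}(X, aX+bY+cT) = a$ and $g_{\theta_0}(F_v,F_v) = a^2+b^2+c^2$. These match the three values computed for the right-hand side, so the two symmetric tensors agree on all of $T_p\Sigma \times T_p\Sigma$ for every non-singular $p$, which proves the lemma.

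The only genuinely delicate point is making sure the identification of $g_{\theta_0}$ with the tensor built from $X,Y,T$ is legitimate — that $(X,Y,T)$ is indeed a $g_{\theta_0}$-orthonormal frame. This follows from the definition of the adapted metric together with the facts established earlier: $X \in \xi_0$ is a unit vector for the Levi metric, $Y = J_0 X$ is then automatically a unit vector $h$-orthogonal to $X$ (since $J_0$ is an isometry of $h$), and $T$ is the Reeb direction, declared orthonormal to $\xi_0$ in the adapted metric. Everything else is the routine bookkeeping of expanding $F_v$ in this frame, which the coefficients $a,b,c$ were defined to record. So I expect no real obstacle beyond being careful with conventions for $g_{\theta_0}$.
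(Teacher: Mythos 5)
Your proof is correct and is essentially the paper's argument spelled out: the paper simply cites the first line of the moving frame formula, $dp = X\omega^1 + Y\omega^2 + T\omega^3$, together with the $g_{\theta_0}$-orthonormality of $(X,Y,T)$, and your evaluation of both tensors on the basis $\{F_u,F_v\}$ using $F_v = aX+bY+cT$ is exactly the explicit verification this entails.
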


\begin{proof}
This lemma is a easy consequence of the first one of the moving frame
formula (\ref{movingframe}).
\end{proof}

In the following section, we will show that the form $IV=\widetilde{F}^{\ast
}\omega _{1}^{2}$ is completely determined by all $g_{\theta _{0}},\alpha $
and $l$. We therefore obtain a complete set of invariants for surfaces on
the non-singular part.

\section{A complete set of invariants for surfaces in $H^1$}

Let $\Sigma $ be an oriented surface and suppose $f:\Sigma \rightarrow H^{1}$
be an embedding. For the convenient of expression, we will not distinguish
surfaces $\Sigma $ and $f(\Sigma )$. For each non-singular point $p\in
\Sigma $, we specify an orthonormal frame by $(p;e_{1},e_{2},T)$, here $%
e_{1} $ is tangent to the characteristic foliation and $e_{2}=J_{0}e_{1}$. A
Darboux frame is a moving frame which is smoothly defined on $\Sigma $,
except those singular points, hence giving a lifting of $f$ to $PSH(1)$
which is defined by $F$. Now we would like to compute the Darboux derivative
$F^{\ast }\omega $ of $F$. In the following, instead of $F^{\ast }\omega $,
we still use
\begin{equation}
\omega =\left(
\begin{array}{cccc}
0 & 0 & 0 & 0 \\
\omega ^{1} & 0 & -\omega _{1}{}^{2} & 0 \\
\omega ^{2} & \omega _{1}{}^{2} & 0 & 0 \\
\omega ^{3} & \omega ^{2} & -\omega ^{1} & 0%
\end{array}%
\right) ,
\end{equation}%
to express the Darboux derivative. It satisfies the integrability condition $%
d\omega +\omega \wedge \omega =0$, that is,
\begin{equation}
\begin{split}
d\omega ^{1}& =\omega _{1}^{2}\wedge \omega ^{2} \\
d\omega ^{2}& =-\omega _{1}^{2}\wedge \omega ^{1} \\
d\omega ^{3}& =2\  \omega ^{1}\wedge \omega ^{2} \\
d\omega _{1}^{2}& =0
\end{split}
\label{mcsteq}
\end{equation}

Let $g_{\theta_{0}}=h+\theta_{0}^{2}$ be the adapted metric. From Section %
\ref{invpasur}, which we see that $\omega^{2}=\alpha \omega^{3}$ on the
nonsingular part of $\Sigma$, it is easy to see that
\begin{equation*}
\begin{split}
g_{\theta_{0}}|_{\Sigma}&=\omega^{1}\otimes \omega^{1}+\omega^{2}\otimes
\omega^{2}+\omega^{3}\otimes \omega^{3} \\
&=\omega^{1}\otimes \omega^{1}+(1+\alpha^2)\omega^{3}\otimes \omega^{3}.
\end{split}%
\end{equation*}
Define
\begin{equation}  \label{baeq1}
\begin{split}
\hat{\omega}^{1}&=\omega^{1} \\
\hat{\omega}^{2}&=\sqrt{1+\alpha^{2}}\omega^{3}.
\end{split}%
\end{equation}
This is an orthonormal coframe of $g_{\theta_{0}}|_{\Sigma}$ and the dual
frame is
\begin{equation}
\begin{split}
\hat{e}_{1}&=e_{1} \\
\hat{e}_{2}&=e_{\Sigma}=\frac{\alpha e_{2}+T}{\sqrt{1+\alpha^{2}}}.
\end{split}%
\end{equation}
Let $\hat{\omega}_{1}^{2}$ be the Levi-Civita connection of $%
g_{\theta_{0}}|_{\Sigma}$ with respect to the frame $\hat{\omega}^{1},\hat{%
\omega}^{2}$. By the fundamental theorem in Riemannian geometry, this
connection is uniquely defined by
\begin{equation}  \label{risteq}
\begin{split}
d\hat{\omega}^{1}&=-\hat{\omega}_{2}^{1}\wedge \hat{\omega}^{2} \\
d\hat{\omega}^{2}&=-\hat{\omega}_{1}^{2}\wedge \hat{\omega}^{1} \\
\hat{\omega}_{1}^{2}&=-\hat{\omega}_{2}^{1}.
\end{split}%
\end{equation}

The following Proposition point out that $\omega_{1}^{2}$ is completely
determined by the induced fundamental form $g_{\theta_{0}}|_{\Sigma}$ and
the functions $\alpha$ and $l$.

\begin{prop}
\label{prop1} We have
\begin{equation}
\begin{split}
\omega_{1}^{2}&=\frac{\alpha}{\sqrt{1+\alpha^{2}}}\hat{\omega_{1}^{2}}+\frac{%
l}{1+\alpha^{2}}\hat{\omega}^{1}+ \frac{e_{1}\alpha}{(1+\alpha^2)^{\frac{3}{2%
}}}\hat{\omega}^2. \\
&=l\hat{\omega}^1+\frac{2\alpha^{2}+(e_{1}\alpha)}{\sqrt{1+\alpha^{2}}}\hat{%
\omega}^2, \\
& \\
\hat{\omega}_{1}^{2}&=\frac{\alpha}{\sqrt{1+\alpha^{2}}}\omega_{1}^{2}+\frac{%
2\alpha}{1+\alpha^2}\hat{\omega}^{2} \\
&=\frac{l\alpha}{\sqrt{1+\alpha^{2}}}\hat{\omega}^{1}+\left(2\alpha+\frac{%
\alpha(e_{1}\alpha)}{1+\alpha^{2}}\right)\hat{\omega}^{2}.
\end{split}%
\end{equation}
\end{prop}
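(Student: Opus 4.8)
The plan is to compute the structure equation $d\omega^1 = \omega_1^2 \wedge \omega^2$ from (\ref{mcsteq}) directly in terms of the orthonormal coframe $\hat\omega^1 = \omega^1$, $\hat\omega^2 = \sqrt{1+\alpha^2}\,\omega^3$ of $g_{\theta_0}|_\Sigma$, and then match it against the Riemannian structure equation (\ref{risteq}) that defines $\hat\omega_1^2$ uniquely. The point is that both $\omega_1^2$ and $\hat\omega_1^2$ are $1$-forms on the (two-dimensional) surface, hence can be expanded in the basis $\hat\omega^1,\hat\omega^2$; I want to pin down the four coefficients.

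\textbf{Key steps.} First I would use $\omega^2 = \alpha\,\omega^3$ on the non-singular part (established in Section~\ref{invpasur}), so $\omega^2 = \tfrac{\alpha}{\sqrt{1+\alpha^2}}\hat\omega^2$. Writing $\omega_1^2 = P\,\hat\omega^1 + Q\,\hat\omega^2$ for unknown functions $P,Q$, the equation $d\omega^1 = \omega_1^2\wedge\omega^2$ becomes $d\hat\omega^1 = \big(P\hat\omega^1 + Q\hat\omega^2\big)\wedge \tfrac{\alpha}{\sqrt{1+\alpha^2}}\hat\omega^2 = \tfrac{\alpha P}{\sqrt{1+\alpha^2}}\,\hat\omega^1\wedge\hat\omega^2$; comparing with $d\hat\omega^1 = -\hat\omega_2^1\wedge\hat\omega^2 = \hat\omega_1^2\wedge\hat\omega^2$ and writing $\hat\omega_1^2 = A\hat\omega^1 + B\hat\omega^2$ gives one relation $A = \tfrac{\alpha P}{\sqrt{1+\alpha^2}}$ after reading off the $\hat\omega^1\wedge\hat\omega^2$ coefficient. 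Next I need to bring in the second-kind invariant $l$: from the Darboux derivative (\ref{Darder9}), in normal coordinates $\omega^1 = du + a\,dv$ and $\omega_1^2 = l\,du + m\,dv$, so $\omega_1^2 - l\,\omega^1 = (m - la)\,dv$ is proportional to $\omega^3 = c\,dv$, which identifies the $\hat\omega^1$-coefficient of $\omega_1^2$ as exactly $l$ once one normalizes; this gives $P = l$ directly, so $\omega_1^2 = l\,\hat\omega^1 + Q\,\hat\omega^2$. To find $Q$, I would differentiate $\hat\omega^2 = \sqrt{1+\alpha^2}\,\omega^3$ using $d\omega^3 = 2\,\omega^1\wedge\omega^2$ from (\ref{mcsteq}) and the product rule, expressing $d\alpha = (e_1\alpha)\hat\omega^1 + (e_\Sigma\alpha)\hat\omega^2$ in the dual frame; comparing the resulting expression for $d\hat\omega^2$ with $d\hat\omega^2 = -\hat\omega_1^2\wedge\hat\omega^1 = (A\hat\omega^1 + B\hat\omega^2)\wedge(-\hat\omega^1)$ wait — more precisely $d\hat\omega^2 = \hat\omega^1 \wedge \hat\omega_1^2$ — yields $B$ in terms of $\alpha$ and $e_1\alpha$, namely the $2\alpha + \tfrac{\alpha(e_1\alpha)}{1+\alpha^2}$ that appears in the statement. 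Feeding $A = \tfrac{\alpha l}{\sqrt{1+\alpha^2}}$ and this $B$ back through the relation $A = \tfrac{\alpha P}{\sqrt{1+\alpha^2}}$ (consistency check) and solving the pair of linear equations relating $(P,Q)$ to $(A,B)$ produces $Q = \tfrac{2\alpha^2 + e_1\alpha}{\sqrt{1+\alpha^2}}$, and the first displayed line follows after re-expanding in the mixed basis $\hat\omega_1^2, \hat\omega^1, \hat\omega^2$. The two formulas for $\hat\omega_1^2$ are then obtained by inverting the linear relation between $\omega_1^2$ and $\hat\omega_1^2$.

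\textbf{The linear algebra linking the two connections.} Concretely, the non-singular surface carries two $\mathfrak{so}(2)$-type connection forms: $\omega_1^2$ comes from the pseudohermitian frame $(e_1,e_2,T)$, while $\hat\omega_1^2$ comes from the Riemannian frame $(\hat e_1,\hat e_2) = (e_1, e_\Sigma)$. Since $e_2 = \tfrac{1}{\alpha}(\sqrt{1+\alpha^2}\,e_\Sigma - T)$ and $T$ is parallel-ish only in the ambient sense, the change of frame within the surface is governed by how $e_2$ decomposes against $e_\Sigma$; differentiating $\hat e_2 = \tfrac{\alpha e_2 + T}{\sqrt{1+\alpha^2}}$ and projecting onto $\hat e_1$ gives precisely the $\tfrac{\alpha}{\sqrt{1+\alpha^2}}$ scaling between $\omega_1^2$ and $\hat\omega_1^2$ together with the correction term $\tfrac{2\alpha}{1+\alpha^2}\hat\omega^2$ seen in the statement. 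I would do this computation using $dT = 0$ and the last two lines of the moving-frame formula (\ref{movingframe}) to handle $de_1, de_2$, plus $d\alpha$ expanded in the dual coframe.

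\textbf{Main obstacle.} The conceptual content is light, but the bookkeeping is the real hazard: one must consistently translate between the coordinate description ($du, dv, a, c, l, m$) and the frame description ($\hat\omega^1, \hat\omega^2, e_1\alpha, e_\Sigma\alpha$), and keep track of the factor $\sqrt{1+\alpha^2}$ and its derivative (note $e_1\sqrt{1+\alpha^2} = \tfrac{\alpha(e_1\alpha)}{\sqrt{1+\alpha^2}}$), without sign errors from the antisymmetry of the wedge and the $\hat\omega_1^2 = -\hat\omega_2^1$ convention. The step I expect to be most error-prone is extracting $d\hat\omega^2$ cleanly: it mixes $d\omega^3 = 2\omega^1\wedge\omega^2$ (an ambient curvature-type term producing the $2\alpha^2$) with the $d\alpha$ term, and getting the coefficient of $\hat\omega^1\wedge\hat\omega^2$ exactly right is what forces the specific combination $2\alpha^2 + e_1\alpha$. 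Once that single coefficient is verified, everything else is forced by uniqueness of the Levi-Civita connection and by linear algebra.
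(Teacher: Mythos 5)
There is a genuine gap: the coefficient $Q$ in your expansion $\omega_1^2 = P\hat\omega^1 + Q\hat\omega^2$ is never actually determined. The three pieces of information you invoke --- the equation $d\omega^1=\omega_1^2\wedge\omega^2$, the coordinate identification of $l$ from (\ref{Darder9}), and the computation of $d\hat\omega^2$ --- pin down $P=l$ and both coefficients $A,B$ of $\hat\omega_1^2=A\hat\omega^1+B\hat\omega^2$, but none of them sees $Q$: since $\omega^2$ is proportional to $\hat\omega^2$, the term $Q\hat\omega^2$ is killed in $\omega_1^2\wedge\omega^2$, and $Q$ does not appear anywhere else in your equations. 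The ``pair of linear equations relating $(P,Q)$ to $(A,B)$'' that you propose to solve does not exist at that stage; the only such relation available is the third displayed identity of the proposition, which is part of what is to be proved. Your fallback derivation of that identity --- differentiating $\hat e_2=\frac{\alpha e_2+T}{\sqrt{1+\alpha^2}}$ in the ambient space and projecting onto $\hat e_1$ --- is not justified as stated: the differential in the moving frame formula (\ref{movingframe}) is the coordinatewise (flat) differential of the frame, not the Levi-Civita connection of $g_{\theta_0}$ (note for instance $\langle dX,T\rangle=\omega^2$ while $\langle dT,X\rangle=0$, so it is not even a metric connection for the adapted metric), and hence its tangential part need not coincide with the connection $\hat\omega_1^2$ characterized by (\ref{risteq}).

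The missing equation is the one structure equation you never use, $d\omega^2=-\omega_1^2\wedge\omega^1$. Writing $\omega^2=\frac{\alpha}{\sqrt{1+\alpha^2}}\hat\omega^2$ and expanding $d\omega^2$ by the product rule and (\ref{risteq}) gives
\begin{equation*}
Q=\frac{e_1\alpha}{(1+\alpha^2)^{3/2}}+\frac{\alpha}{\sqrt{1+\alpha^2}}\,B
=\frac{2\alpha^2+e_1\alpha}{\sqrt{1+\alpha^2}},
\end{equation*}
which is exactly where the $e_1\alpha$ term enters $\omega_1^2$; this is the first Cartan-lemma step in the paper's own proof (the paper applies Cartan's lemma to all three of $d\omega^2$, $d\omega^1$, $d\hat\omega^2$ and then solves for the three undetermined functions by evaluating on $e_1$, using $\omega_1^2(e_1)=l$, which is the same fact as your cleaner coordinate identification $P=l$). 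With this one equation added, your outline closes and coincides with the paper's argument; the mixed-basis forms of the identities then follow by substitution.
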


\begin{proof}
Note that $\omega ^{2}=\alpha \omega ^{3}$. Then from the second identity of
(\ref{baeq1}), we have
\begin{equation*}
\begin{split}
d\omega ^{2}& =d\left( \frac{\alpha }{(1+\alpha ^{2})^{\frac{1}{2}}}\hat{%
\omega}^{2}\right) \\
& =d\left( \frac{\alpha }{(1+\alpha ^{2})^{\frac{1}{2}}}\right) \wedge \hat{%
\omega}^{2}+\frac{\alpha }{(1+\alpha ^{2})^{\frac{1}{2}}}d\hat{\omega}^{2} \\
& =e_{1}\left( \frac{\alpha }{(1+\alpha ^{2})^{\frac{1}{2}}}\right) \hat{%
\omega}^{1}\wedge \hat{\omega}^{2}-\frac{\alpha }{(1+\alpha ^{2})^{\frac{1}{2%
}}}\hat{\omega}_{1}^{2}\wedge \hat{\omega}^{1} \\
& =\hat{\omega}^{1}\wedge \left( e_{1}\left( \frac{\alpha }{(1+\alpha ^{2})^{%
\frac{1}{2}}}\right) \hat{\omega}^{2}+\frac{\alpha }{(1+\alpha ^{2})^{\frac{1%
}{2}}}\hat{\omega}_{1}^{2}\right) ,
\end{split}%
\end{equation*}%
where at the third equality above, we have used the second formula of the
structure equation (\ref{risteq}) in Riemannian geometry. On the other hand,
from the Maurer-Cartan structure equation (\ref{mcsteq})
\begin{equation*}
d\omega ^{2}=-\omega _{1}^{2}\wedge \omega ^{1}=\hat{\omega}^{1}\wedge
\omega _{1}^{2}.
\end{equation*}%
Together the above two formulae and by Cartan lemma, we see that there
exists a function $D$ such that
\begin{equation}
\begin{split}
\omega _{1}^{2}& =e_{1}\left( \frac{\alpha }{(1+\alpha ^{2})^{\frac{1}{2}}}%
\right) \hat{\omega}^{2}+\frac{\alpha }{(1+\alpha ^{2})^{\frac{1}{2}}}\hat{%
\omega}_{1}^{2}+D\hat{\omega}^{1} \\
& =\frac{e_{1}\alpha }{(1+\alpha ^{2})^{\frac{3}{2}}}\hat{\omega}^{2}+\frac{%
\alpha }{(1+\alpha ^{2})^{\frac{1}{2}}}\hat{\omega}_{1}^{2}+D\hat{\omega}%
^{1}.
\end{split}
\label{baeq2}
\end{equation}%
Similarly, we compute
\begin{equation}
\begin{split}
-\hat{\omega}_{2}^{1}\wedge \hat{\omega}^{2}& =d\hat{\omega}^{1}=d\omega ^{1}
\\
& =\omega _{1}^{2}\wedge \omega ^{2} \\
& =\frac{\alpha }{\sqrt{1+\alpha ^{2}}}\omega _{1}^{2}\wedge \hat{\omega}%
^{2}.
\end{split}%
\end{equation}%
Again, by Cartan lemma, there exists a function $A$ such that
\begin{equation}
-\hat{\omega}_{2}^{1}=\frac{\alpha }{\sqrt{1+\alpha ^{2}}}\omega _{1}^{2}+A%
\hat{\omega}^{2}.  \label{baeq3}
\end{equation}%
Finally, we compute
\begin{equation}
\begin{split}
-\hat{\omega}_{1}^{2}\wedge \hat{\omega}^{1}& =d\hat{\omega}^{2}=d\left(
(1+\alpha ^{2})^{\frac{1}{2}}\omega ^{3}\right) \\
& =(1+\alpha ^{2})^{\frac{1}{2}}d\omega ^{3}+d(1+\alpha ^{2})^{\frac{1}{2}%
}\wedge \omega ^{3} \\
& =2\alpha (1+\alpha ^{2})^{\frac{1}{2}}\hat{\omega}^{1}\wedge \omega ^{3}+%
\frac{\alpha }{(1+\alpha ^{2})^{\frac{1}{2}}}d\alpha \wedge \omega ^{3} \\
& =\left( 2\alpha +\frac{\alpha (e_{1}\alpha )}{1+\alpha ^{2}}\right) \hat{%
\omega}^{1}\wedge \hat{\omega}^{2},
\end{split}%
\end{equation}%
where we have used the third formula of (\ref{mcsteq}) and $\hat{\omega}%
^{2}\wedge \omega ^{3}=0$. Therefore, there exists a function $B$ such that
\begin{equation}
\hat{\omega}_{1}^{2}=\left( 2\alpha +\frac{\alpha (e_{1}\alpha )}{1+\alpha
^{2}}\right) \hat{\omega}^{2}+B\hat{\omega}^{1}.  \label{baeq4}
\end{equation}%
By (\ref{baeq2}) and (\ref{baeq3}), we get
\begin{equation*}
\begin{split}
D& =\omega _{1}^{2}(e_{1})-\frac{\alpha }{\sqrt{1+\alpha ^{2}}}\hat{\omega}%
_{1}^{2}(e_{1}) \\
& =\frac{\omega _{1}^{2}(e_{1})}{1+\alpha ^{2}}=\frac{l}{1+\alpha ^{2}}.
\end{split}%
\end{equation*}%
Similarly, by (\ref{baeq2}), (\ref{baeq3}) and (\ref{baeq4}), we obtain
\begin{equation}
\begin{split}
A& =\frac{2\alpha }{1+\alpha ^{2}} \\
B& =\frac{l\alpha }{\sqrt{1+\alpha ^{2}}}.
\end{split}%
\end{equation}%
These complete the proof.
\end{proof}

\subsection{The proof of Theorem \protect \ref{main6}}

Let $K$ be the Gaussian curvature of the induced metric $g_{\theta_{0}}|_{%
\Sigma}$, hence we have
\begin{equation}
d\hat{\omega}_{1}^{2}=Kd\sigma,
\end{equation}
where $d\sigma$ is the area form $\hat{\omega}^{1}\wedge \hat{\omega}^{2}$.
Using Proposition \ref{prop1} and (\ref{mcsteq}) and (\ref{risteq}), we
compute
\begin{equation}
\begin{split}
d\hat{\omega}_{1}^{2}&=d\left(\frac{\alpha}{\sqrt{1+\alpha^{2}}}%
\omega_{1}^{2}+\frac{2\alpha}{1+\alpha^2}\hat{\omega}^{2}\right) \\
&=d\left(\frac{\alpha}{\sqrt{1+\alpha^{2}}}\right)\wedge
\omega_{1}^{2}+d\left(\frac{2\alpha}{1+\alpha^2}\right)\wedge \hat{\omega}%
^{2} +\frac{2\alpha}{1+\alpha^2}d\hat{\omega}^{2} \\
&=\frac{d\alpha}{(1+\alpha^2)^{\frac{3}{2}}}\wedge \omega_{1}^{2}+\frac{%
2(1-\alpha^2)d\alpha}{(1+\alpha^2)^{2}}\wedge \hat{\omega}^{2} -\frac{2\alpha%
}{1+\alpha^2}\hat{\omega}_{1}^{2}\wedge \hat{\omega}^{1} \\
&=\left(\frac{(e_{1}\alpha)^{2}+2(1+\alpha^{2})(e_{1}\alpha)+4\alpha^{2}(1+%
\alpha^{2})-l(e_{\Sigma}\alpha)(1+\alpha^{2})^{\frac{1}{2}}} {%
(1+\alpha^{2})^{2}}\right)\hat{\omega}^{1}\wedge \hat{\omega}^{2}.
\end{split}%
\end{equation}
These completes the proof of Theorem \ref{main6}.

\subsection{The derivation of the integrability condition (\protect \ref%
{Intconsur})}

We compute
\begin{equation}
\begin{split}
0&=d\omega_{1}^{2} \\
&=d\left(\frac{\alpha}{\sqrt{1+\alpha^{2}}}\hat{\omega_{1}^{2}}+\frac{l}{%
1+\alpha^{2}}\hat{\omega}^{1}+ \frac{e_{1}\alpha}{(1+\alpha^2)^{\frac{3}{2}}}%
\hat{\omega}^2\right) \\
&=\Big \{-(1+\alpha^{2})^{\frac{3}{2}}(e_{\Sigma}l)+(1+%
\alpha^{2})(e_{1}e_{1}\alpha)-\alpha(e_{1}\alpha)^{2}+4\alpha(1+%
\alpha^{2})(e_{1}\alpha) \\
&+\alpha(1+\alpha^{2})^{2}K+\alpha l(1+\alpha^{2})^{\frac{1}{2}%
}(e_{\Sigma}\alpha)+\alpha(1+\alpha^{2})l^{2}\Big \} \frac{\hat{\omega}%
^{1}\wedge \hat{\omega}^{2}}{(1+\alpha^2)^{\frac{5}{2}}}.
\end{split}%
\end{equation}
Therefore the integrability condition (\ref{Intconsur}) is equivalent to $%
d\omega_{1}^{2}=0$.

\subsection{The proof of Theorem \protect \ref{main8}}

First we show the existence. Define an $psh(1)$-valued one-form $\phi$ on
the non-singular part of $\Sigma$ by
\begin{equation}
\phi=\left(%
\begin{array}{cccc}
0 & 0 & 0 & 0 \\
\hat{\omega}^{1} & 0 & -\omega_{1}^{2} & 0 \\
\frac{\alpha^{^{\prime }}}{\sqrt{1+(\alpha^{^{\prime }})^{2}}}\hat{\omega}%
^{2} & \omega_{1}^{2} & 0 & 0 \\
\frac{1}{\sqrt{1+(\alpha^{^{\prime }})^{2}}}\hat{\omega}^{2} & \frac{%
\alpha^{^{\prime }}}{\sqrt{1+(\alpha^{^{\prime }})^{2}}}\hat{\omega}^{2} & -%
\hat{\omega}^{1} & 0%
\end{array}%
\right),
\end{equation}
where
\begin{equation}
\omega_{1}^{2}=\frac{\alpha^{^{\prime }}}{\sqrt{1+(\alpha^{^{\prime }})^{2}}}%
\hat{\omega_{1}^{2}}+\frac{l^{^{\prime }}}{1+(\alpha^{^{\prime }})^{2}}\hat{%
\omega}^{1}+ \frac{e_{1}\alpha^{^{\prime }}}{(1+(\alpha^{^{\prime }})^{2})^{%
\frac{3}{2}}}\hat{\omega}^2.
\end{equation}
Then it is easy to check that $\phi$ satisfies $d\phi+\phi \wedge \phi=0$ if
and only if the integrability condition (\ref{Intconsur}) holds. Therefore,
by Theorem \ref{ft2}, for each $x\in \Sigma$ there exists an open set $U$
containing $x$ and an embedding $f:U\rightarrow H^1$ such that $%
g=f^{*}(g_{\theta_{0}}), \alpha^{^{\prime }}=f^{*}\alpha$ and $l^{^{\prime
}}=f^{*}l$. Next we show the uniqueness. By Proposition \ref{prop1}, we see
that the Darboux derivative is completely determined by the induced metric $%
g_{\theta_{0}}|_{\Sigma}$, the $p$-variation $\alpha$ and the $p$-mean
curvature $l$. Therefore, by Theorem \ref{ft1}, the embedding into $H^1$ is
unique up to a Heisenberg rigid motion.

\section{Appendix}

In this Appendix, we give another proof of Theorem \ref{main1}.

\subsection{The second proof of Theorem \protect \ref{main1}}

For a horizontally regular curve $\gamma(s)$ parametrized by horizontal
arc-length $s$, we define a moving frames $(X(s),Y(s),T(s))$ by
\begin{equation}  \label{mfs}
X(s)=\gamma_{\xi}^{^{\prime }}(s),\  \ Y(s)=JX(s),\  \  \text{and}\  \ T(s)=T.
\end{equation}
Then we have that
\begin{equation}  \label{mfsf1}
\begin{split}
X^{^{\prime }}(s)=k(s)Y(s) \\
Y^{^{\prime }}(s)=-k(s)X(s)-T \\
T^{^{\prime }}(s)=0.
\end{split}%
\end{equation}
Note also that
\begin{equation}  \label{mfsf2}
\gamma^{^{\prime }}(s)=X(s)+\tau(s)T.
\end{equation}
Now, assume that two curves $\gamma(s)$ and $\bar{\gamma}(s)$ satisfy the
conditions
\begin{equation}
k(s)=\bar{k}(s)\  \  \text{and}\  \  \tau(s)=\bar{\tau}(s),\ s\in I.
\end{equation}
After performing a Heisenberg rigid motion (i.e., a pseudohermitian
transformation on $H^1$), we can assume, without loss of generality, that
\begin{equation}
\bar{\gamma}(s_0)=\gamma(s_{0}), \bar{X}(s_0)=X(s_0),\  \text{and}\  \bar{Y}%
(s_0)=Y(s_0),
\end{equation}
for a fixed $s_{0}\in I$. Define $A(s)=<X(s),\bar{X}(s)>+<Y(s),\bar{Y}(s)>$.
By using the moving franes formula (\ref{mfsf1}), we have
\begin{equation}
\begin{split}
A^{^{\prime }}(s)&=<X^{^{\prime }}(s),\bar{X}(s)>+<X(s),\bar{X}^{^{\prime
}}(s)>+<Y^{^{\prime }}(s),\bar{Y}(s)>+<Y(s),\bar{Y}^{^{\prime }}(s)> \\
&=k<Y(s),\bar{X}(s)>+\bar{k}<X(s),\bar{Y}(s)>+<-kX-T,\bar{Y}(s)>+<Y,-\bar{k}%
\bar{X}-\bar{T}> \\
&=0.
\end{split}%
\end{equation}
Since $A(s_0)=2$, we get $A(s)=2$, hence that $X(s)=\bar{X}(s)$ and $Y(s)=%
\bar{Y}(s)$ for each $s\in I$. In particular, we have $\gamma_{\xi}^{^{%
\prime }}(s)=\bar{\gamma}_{\xi}^{^{\prime }}(s)$. Also note that $\tau(s)=%
\bar{\tau}(s)$, by (\ref{mfsf2}), we have $\gamma_{T}^{^{\prime }}(s)=\bar{%
\gamma}_{T}^{^{\prime }}(s)$. We therefore obtain that $\gamma^{^{\prime
}}(s)=\bar{\gamma}^{^{\prime }}(s)$, which implies that $\gamma(s)=\bar{%
\gamma}(s)+C$ for some constant $C$. Since $\gamma(s_0)=\bar{\gamma}(s_0)$,
we see that $C=0$, that is, $\gamma(s)=\bar{\gamma}(s)$ for all $s\in I$.

%\bibliography{main}

\end{document}